\DeclareMathOperator*{\argmax}{arg\:max}
\DeclareMathOperator*{\argmin}{arg\:min}
\DeclareMathOperator*{\arginf}{arg\:inf}
\begin{document}
\newtheorem{thm}{Theorem}
\newtheorem{corollary}[thm]{Corollary}
\newtheorem{conj}[thm]{Conjecture}
\newtheorem{lemma}[thm]{Lemma}
\newtheorem{proposition}[thm]{Proposition}
\newtheorem{problem}{Problem}
\newtheorem{remark}{Remark}
\newtheorem{definition}{Definition}
\newtheorem{example}{Example}

\newcommand{\prox}{\rm{prox}}
\newcommand{\bp}{{\bm{p}}}
\newcommand{\bq}{{\bm{q}}}
\newcommand{\bc}{{\bm{c}}}
\newcommand{\be}{{\bm{e}}}
\newcommand{\br}{{\bm{r}}}
\newcommand{\bw}{{\bm{w}}}
\newcommand{\bx}{{\bm{x}}}
\newcommand{\by}{{\bm{y}}}
\newcommand{\bz}{{\bm{z}}}
\newcommand{\bbf}{{\bm{f}}}
\newcommand{\brh}{{\bm{\varrho}}}
\newcommand{\balpha}{{\bm{\alpha}}}
\newcommand{\bbeta}{{\bm{\beta}}}
\newcommand{\bdelta}{{\bm{\delta}}}
\newcommand{\bPi}{{\bm{\Pi}}}
\newcommand{\bC}{{\bm{C}}}
\newcommand{\bL}{{\bm{L}}}
\newcommand{\bI}{{\bm{I}}}
\newcommand{\bP}{{\bm{P}}}
\newcommand{\bD}{{\bm{D}}}
\newcommand{\bQ}{{\bm{Q}}}
\renewcommand{\dagger}{{T}}

\renewcommand{\thefootnote}{\fnsymbol{footnote}}

\newcommand{\differential}{{\rm{d}}}

\newcommand{\interior}[1]{%
  {\kern0pt#1}^{\mathrm{o}}%
}

\newcommand{\diag}{\operatorname{diag}}
\newcommand{\tr}{\operatorname{trace}}
\newcommand{\ignore}[1]{}

\newcommand{\magenta}{\color{magenta}}
\newcommand{\red}{\color{red}}
\newcommand{\blue}{\color{blue}}
\newcommand{\gray}{\color{gray}}
\definecolor{grey}{rgb}{0.6,0.3,0.3}
\definecolor{lgrey}{rgb}{0.9,.7,0.7}

\newcommand{\symsum}{\displaystyle\sum_{\rm{symm}}}

\def\spacingset#1{\def\baselinestretch{#1}\small\normalsize}
\setlength{\parindent}{20pt}
\setlength{\parskip}{12pt}
\spacingset{1}

\title{\huge{Gradient Flow Algorithms for Density Propagation in Stochastic Systems}}

\author{Kenneth F. Caluya, and Abhishek Halder
\thanks{K.F. Caluya, and A. Halder are with the Department of Applied Mathematics, University of California, Santa Cruz,
        CA 95064, USA;
        {\texttt{kcaluya,ahalder@ucsc.edu}}}
}

\markboth{\today}{}

\maketitle

\begin{abstract}
We develop a new computational framework to solve the partial differential equations (PDEs) governing the flow of the joint probability density functions (PDFs) in continuous-time stochastic nonlinear systems. The need for computing the transient joint PDFs subject to prior dynamics arises in uncertainty propagation, nonlinear filtering and stochastic control. Our methodology breaks away from the traditional approach of spatial discretization or function approximation -- both of which, in general, suffer from the ``curse-of-dimensionality". In the proposed framework, we discretize time but not the state space. We solve infinite dimensional proximal recursions in the manifold of joint PDFs, which in the small time-step limit, is theoretically equivalent to solving the underlying transport PDEs. The resulting computation has the geometric interpretation of gradient flow of certain free energy functional with respect to the Wasserstein metric arising from the theory of optimal mass transport. We show that dualization along with an entropic regularization, leads to a cone-preserving fixed point recursion that is proved to be contractive in Thompson metric. A block co-ordinate iteration scheme is proposed to solve the resulting nonlinear recursions with guaranteed convergence. This approach enables remarkably fast computation for non-parametric transient joint PDF propagation. Numerical examples and various extensions are provided to illustrate the scope and efficacy of the proposed approach.  
\end{abstract}

\noindent{\bf Keywords:} Proximal operator, Fokker-Planck-Kolmogorov equation, optimal transport, gradient descent, uncertainty propagation.

\section{Introduction}\label{IntroSection}
Consider the continuous-time dynamics of the state vector $\bm{x}(t)\in\mathbb{R}^{n}$ governed by an It\^{o} stochastic differential equation (SDE) 
\begin{eqnarray}
\differential\bx = \bm{f}\left(\bm{x},t\right)\differential t \: + \: \bm{g}(\bm{x},t)\:\differential\bw, \quad \bm{x}(t=0)=\bm{x}_{0},
\label{genericItoSDE}	
\end{eqnarray}
where the joint probability density function (PDF) for the initial condition $\bm{x}_{0}$ is a known function $\rho_{0}$; we use the notation $\bm{x}_{0} \sim \rho_{0}$. The process noise $\bw(t)\in\mathbb{R}^{m}$ is Wiener and satisfy $\mathbb{E}\left[\differential w_{i} \differential w_{j}\right] = \delta_{ij}\differential t$ for all $i,j=1,\hdots,n$, where $\delta_{ij}=1$ for $i=j$, and zero otherwise. Then, the flow of the joint PDF $\rho(\bm{x},t)$ for the state vector $\bm{x}(t)$ (i.e., $\bm{x} \sim \rho$) is governed by the partial differential equation (PDE) initial value problem:
\begin{subequations}
	\begin{align}{}
			\displaystyle\frac{\partial\rho}{\partial t} = -\nabla\cdot\left(\rho\bm{f}\right) + \frac{1}{2}\sum_{i,j=1}^{n}\frac{\partial^{2}}{\partial x_{i}\partial x_{j}}(\rho\bm{g}\bm{g}^{\top})_{ij}, \label{FPKoperator}\\ 
	\qquad\rho(\bm{x},t=0) = \rho_{0}(\bm{x}) \quad \text{(given)}.
	\end{align}
\label{FPKivp}	
\end{subequations} 
The \emph{second order} transport PDE (\ref{FPKoperator}) is known as the \emph{Fokker-Planck} or \emph{Kolmogorov's forward equation} \cite{risken1996fokker}. Hereafter, we will refer it as the Fokker-Planck-Kolmogorov (FPK) PDE. 

In this paper, we consider the problem of density or belief propagation, i.e., the problem of computing the transient joint PDF $\rho(\bm{x},t)$  that solves a PDE initial value problem of the form (\ref{FPKivp}). From an application standpoint, the need for computing $\rho(\bm{x},t)$ can be motivated by two types of problems. The \emph{first} is dispersion analysis, where one is interested in predicting or analyzing the uncertainty evolution over time, e.g., in meteorological forecasting \cite{ehrendorfer1994liouville}, spacecraft entry-descent-landing \cite{halder2010beyond,halder2011dispersion}, orientation density evolution for liquid crystals in chemical physics \cite{hess1976fokker,muschik1997mesoscopic,kalmykov1998analytical}, and in motion planning \cite{park2005diffusion,park2008kinematic,hamann2008framework}. In these applications, the quantity of interest is the joint PDF $\rho(\bm{x},t)$ and its statistics (e.g., transient moments and marginal PDFs). The \emph{second} type of applications require $\rho(\bm{x},t)$ as an intermediate step toward computing other quantities of interest. For example, in nonlinear filtering \cite{challa2000nonlinear,daum2005nonlinear}, the joint PDF $\rho(\bm{x},t)$ serves as the prior in computing the posterior (i.e., conditional state) PDF. In probabilistic model validation \cite{halder2011model,halder2012further, halder2014probabilistic} and controller verification \cite{halder2015optimal}, computing $\rho(\bm{x},t)$ helps in quantifying the density-level prediction-observation mismatch.  All these applications require fast computation of $\rho(\bm{x},t)$ in a scalable and unified manner, as opposed to developing algorithms in a case-by-case basis.

Given its widespread applications, problem (\ref{FPKivp}) has received sustained attention from the scientific computing community where the predominant solution approaches have been spatial discretization and function approximation -- both of which, in general, suffer from the ``curse of dimensionality" \cite{bellman1957dynamic}. The purpose of this paper is to pursue a systems-theoretic variational viewpoint for computing $\rho(\bm{x},t)$ that breaks away from the ``solve PDE as a PDE" philosophy, and instead solves (\ref{FPKivp}) as a gradient descent on the manifold of joint PDFs. This emerging geometric viewpoint for uncertainty propagation and filtering has been reported in our recent work \cite{halder2017gradient,halder2018gradient}, but it remained unclear whether this viewpoint can offer computational benefit over the standard PDE solvers. It is not at all obvious whether and how an infinite-dimensional gradient descent can numerically obviate function approximation or spatial discretization. The contribution of this paper is to demonstrate that not only this is possible, but also that the same enables fast computation.

To conceptualize the main idea, we appeal to the \emph{metric viewpoint} of gradient descent, where a continuous-time gradient flow is realized by small time-step recursions of a proximal operator with respect to (w.r.t.) a suitable metric. For example, consider the finite dimensional gradient flow 
\begin{eqnarray}
\dfrac{\differential \bx}{\differential t } = -\nabla \varphi \left(\bx\right), \quad \bx(0) = \bx_{0}, 
\label{EGRflow}
\end{eqnarray}
where $\bx ,\bx_0\in \mathbb{R}^{n}$, and the continuously differentiable function $\varphi : \mathbb{R}^{n} \rightarrow \mathbb{R}_{\geq 0}$. The flow $\bx(t)$ generated by (\ref{EGRflow}) can be realized via variational recursion
\begin{eqnarray}
\bx_{k} = \underset{\bx\in\mathbb{R}^{n}} {{\rm{arg\:min}}}\: \frac{1}{2} \parallel \bx - \bx_{k-1} \parallel_{2}^2 + h\: \varphi(\bx) + o(h), \quad k\in\mathbb{N},
\label{ProxEGD}
\end{eqnarray}
in the sense that as the step-size $h\downarrow 0$, we have $\bm{x}_{k} \rightarrow \bm{x}(t=kh)$, i.e., the sequence $\{\bx_{k}\}$ converges pointwise to the flow $\bx(t)$. This can be verified by rewriting the Euler discretization of (\ref{EGRflow}), given by
\begin{eqnarray*}
\bx_{k} - \bx_{k-1} =-h \nabla \varphi(\bx_{k-1}), 
\end{eqnarray*} 
as
\begin{align}
\bx_{k} &= \underset{\bx\in\mathbb{R}^{n}}{{\rm{arg\:min}}}\: \frac{1}{2} \parallel \bx - \left(\bx_{k-1} - h \nabla \varphi(\bx_{k-1})\right) \parallel_{2}^2\nonumber\\
&=  \underset{\bx\in\mathbb{R}^{n}}{{\rm{arg\:min}}}\:\frac{1}{2} \parallel \bx - \bx_{k-1} \parallel_{2}^2 \:+\: \langle \bx - \bx_{k-1}, h\nabla\varphi(\bx_{k-1}) \rangle \nonumber\\
&\qquad\qquad\qquad\qquad\qquad\qquad\qquad\qquad\quad \:+\: h\varphi(\bx_{k-1}),
\label{FiniteDimGDProx}
\end{align}
where we used the fact that adding and omitting constant terms do not change the arg min. From (\ref{FiniteDimGDProx}), one can arrive at (\ref{ProxEGD}) by invoking first order approximation of $\varphi(\bx)$ at $\bx=\bx_{k}$. In (\ref{ProxEGD}), the mapping $\bx_{k-1} \mapsto \bx_{k}$ given by
\begin{eqnarray}
{\prox}^{\parallel \cdot \parallel_{2}}_{h\varphi}(\bx_{k-1}) := \underset{\bx\in\mathbb{R}^{n}} {{\rm{arg\:min}}}\: \frac{1}{2} \parallel \bx - \bx_{k-1} \parallel_{2}^2 + h\: \varphi(\bx),
\label{FiniteDimProxOpDef}
\end{eqnarray}
is called the ``proximal operator" \cite[p. 142]{parikh2014proximal} of $h\varphi$ w.r.t. the standard Euclidean metric $\parallel \cdot \parallel_{2}$. Notice that $\varphi(\cdot)$ serves as a Lyapunov function since the quantity
\begin{eqnarray}
\displaystyle\frac{\differential}{\differential t}\varphi = \langle \nabla\varphi, - \nabla\varphi\rangle = -\parallel \nabla\varphi\parallel_{2}^{2}
\label{EGRlyap}	
\end{eqnarray}
equals $0$ at the stationary point of (\ref{EGRflow}), and $<0$ otherwise.

\begin{figure}[t]
\centering
\vspace*{0.05in}
\includegraphics[width=.89\linewidth]{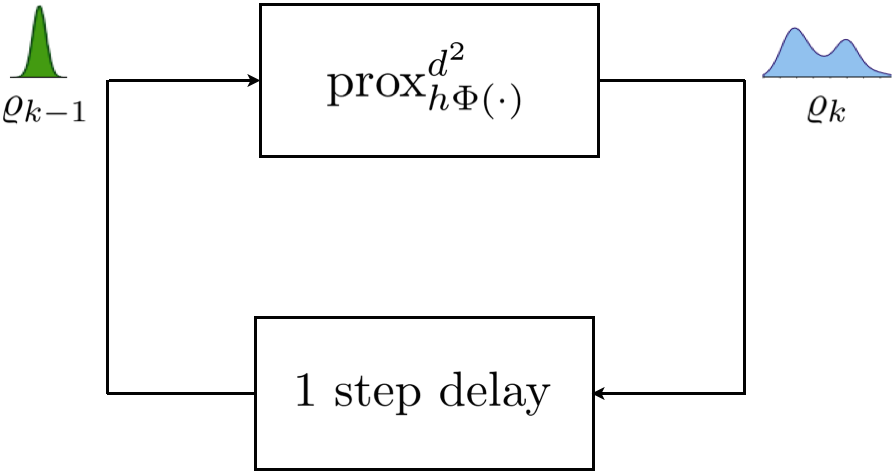}
\caption{\small{The gradient descent on the manifold of PDFs can be described by successive evaluation of proximal operators to recursively update PDFs from time $t=(k-1)h$ to $t= kh$ for $k\in\mathbb{N}$, and time-step $h>0$.}}
\vspace*{-0.25in}
\label{fig:ProxSchematic}
\end{figure}

In the infinite dimensional setting, we are interested in computing the flow generated by (\ref{FPKivp}) via gradient descent on the manifold of joint PDFs with finite second (raw) moments, denoted as\footnote{We denote the expectation operator w.r.t. the measure $\rho(\bx)\differential\bx$ as $\mathbb{E}_{\rho}\left[\cdot\right]$.} 
\[\mathscr{D}_{2} := \{\rho:\mathbb{R}^{n}\mapsto\mathbb{R} \mid \rho \geq 0, \int_{\mathbb{R}^{n}}\rho=1, \:\mathbb{E}_{\rho}[\bm{x}^{\top}\bm{x}] < \infty\}.\] 
Specifically, let $d(\cdot,\cdot)$ be a distance metric on the manifold $\mathscr{D}_{2}$, and let the functional $\Phi : \mathscr{D}_{2} \mapsto \mathbb{R}_{\geq 0}$. Then, for some chosen step-size $h>0$, the infinite dimensional proximal operator of $h\Phi$ w.r.t. the distance metric $d(\cdot,\cdot)$, given by
\begin{eqnarray}
{\prox}^{d(\cdot)}_{h\Phi}(\varrho_{k-1}) := \underset{\varrho\in\mathscr{D}_{2}} {{\rm{arg\:inf}}}\: \frac{1}{2} d\left(\varrho,\varrho_{k-1}\right)^2 + h\: \Phi(\varrho),
\label{InfiniteDimProxOpDef}
\end{eqnarray}
can be used to define a proximal recursion (Fig. \ref{fig:ProxSchematic}):
\begin{eqnarray}
\varrho_{k} = {\prox}^{d(\cdot)}_{h\Phi}(\varrho_{k-1}), \quad k\in\mathbb{N}, \quad \varrho_{0}(\bm{x}) := \rho_{0}(\bm{x}).
\label{ProxRecursionInfiniteDim}	
\end{eqnarray}
Just as the proximal recursion (\ref{ProxEGD}) approximates the finite dimensional flow (\ref{EGRflow}), similarly it is possible to design $d(\cdot,\cdot)$ and $\Phi(\cdot)$ in (\ref{InfiniteDimProxOpDef}) as function of the drift $\bm{f}$ and diffusion $\bm{g}$ in (\ref{FPKoperator}) such that the proximal recursion (\ref{ProxRecursionInfiniteDim}) approximates the infinite dimensional flow (\ref{FPKivp}). This idea was first proposed in \cite{jordan1998variational}, showing that when $\bm{f}$ is a gradient vector field and $\bm{g}$ is a scalar multiple of identity matrix, then the distance $d(\cdot,\cdot)$ can be taken as the Wasserstein-2 metric \cite{villani2003topics} with $\Phi(\cdot)$ as the free energy functional. In particular, the solution of (\ref{ProxRecursionInfiniteDim}) was shown to converge to the flow of (\ref{FPKivp}), i.e., $\varrho_{k}(\bx) \rightarrow \rho(\bx, t=kh)$ in strong $L_{1}(\mathbb{R}^{n})$ sense, as $h\downarrow 0$. The resulting variational recursion (\ref{ProxRecursionInfiniteDim}) has since been known as the \emph{Jordan-Kinderlehrer-Otto (JKO) scheme} \cite{ambrosio2008gradient}, and we will refer to the FPK operator (\ref{FPKoperator}) with such assumptions on $\bm{f}$ and $\bm{g}$ to be in ``JKO canonical form". Similar gradient descent schemes have been derived for many other PDEs; see e.g., \cite{santambrogio2017euclidean} for a recent survey. 

The remaining of this paper is organized as follows. Section \ref{JKOformsectionlabel} explains the JKO canonical form and the corresponding free energy functional $\Phi(\cdot)$. Our algorithms and convergence results are presented in Section \ref{MainResultSectionLabel}, followed by numerical simulation results in Section \ref{NumericalResultsSectionLabel}. Section \ref{ExtensionsSectionLabel} provides various extensions of the basic algorithm showing how the framework proposed here can be applied to systems not in JKO canonical form. Section \ref{ConclusionsSectionLabel} concludes the paper.

We remark here that a preliminary version \cite{caluya2018proximal} of this work appeared in the 2019 American Control Conference. This paper significantly expands \cite{caluya2018proximal} by incorporating additional results for the so-called McKean-Vlasov flow (Sections \ref{subsecMVintro} and \ref{SubsecMV}), and by providing various extensions of the basic algorithm (Section \ref{ExtensionsSectionLabel}). 

\subsubsection{Notations} 
Throughout the paper, we use bold-faced capital letters for matrices and bold-faced lower-case letters for column vectors. We use the symbol $\langle \cdot, \cdot\rangle$ to denote the Euclidean inner product. In particular, $\langle \bm{A},\bm{B} \rangle:= {\mathrm{trace}}({\bm{A}}^{\top}\bm{B})$ denotes
Frobenius inner product between matrices $\bm{A}$ and $\bm{B}$, and $\langle \bm{a},\bm{b}\rangle := \bm{a}^{\top}\bm{b}$ denotes the inner product between column vectors $\bm{a}$ and $\bm{b}$. The symbols $\nabla$ and $\Delta$ denote the (Euclidean) gradient and the Laplacian operators, respectively. We use $\mathcal{N}(\mu,\sigma^2)$ to denote a univariate Gaussian PDF with mean $\mu$ and variance $\sigma^2$. Likewise, $\mathcal{N}(\bm{\mu},\bm{\Sigma})$ denotes a multivariate Gaussian PDF with mean vector $\bm{\mu}$ and covariance matrix $\bm{\Sigma}$. The expectation operator w.r.t. the PDF $\rho$ is denoted by $\mathbb{E}_{\rho}\left[\cdot\right]$, i.e., $\mathbb{E}_{\rho}\left[\cdot\right]:=\int_{\mathbb{R}^{n}}(\cdot)\rho\:\differential\bx$. The operands $\log(\cdot)$, $\exp(\cdot)$ and $\geq 0$ are to be understood as element-wise. The notations $\odot$ and $\oslash$ denote element-wise (Hadamard) product and division, respectively. We use $\bm{I}_{n}$ to denote the $n\times n$ identity matrix. The symbols $\bm{1}$ and $\bm{0}$ stand for column vectors of appropriate dimension containing all ones, and all zeroes, respectively.  

\subsubsection{Preliminaries}
We next collect definitions and some properties of the 2-Wasserstein metric and the Kullback-Leibler divergence, which will be useful in the development below. 
\begin{definition}(\textbf{2-Wasserstein metric})\label{WdefnLabel}
The 2-Wasserstein metric between two probability measures $\differential\pi_{1}(\bx) := \rho_{1}(\bx)\differential\bx$ and 	$\differential\pi_{2}(\bm{y}) := \rho_{2}(\bm{y})\differential\bm{y}$, supported respectively on $\mathcal{X},\mathcal{Y}\subseteq\mathbb{R}^{n}$, is denoted as $W(\pi_{1},\pi_{2})$ (equivalently, $W(\rho_{1},\rho_{2})$ whenever the measures $\pi_{1},\pi_{2}$ are absolutely continuous so that the respective PDFs $\rho_{1},\rho_{2}$ exist); it is defined as
\begin{eqnarray}
W(\pi_{1},\pi_{2}):=  \left(\underset{\differential\pi\in\Pi\left(\pi_{1},\pi_{2}\right)}{\inf}\displaystyle\int_{\mathcal{X}\times\mathcal{Y}}s(\bm{x};\bm{y})\:\differential\pi\left(\bm{x},\bm{y}\right) \right)^{\frac{1}{2}},
\label{Wdefn}	
\end{eqnarray}
where $s(\bm{x};\bm{y}):=\parallel \bm{x} - \bm{y} \parallel_{2}^{2}$ is the squared Euclidean distance in $\mathbb{R}^{n}$, and $\Pi\left(\pi_{1},\pi_{2}\right)$ denotes the collection of all joint probability measures on $\mathcal{X}\times\mathcal{Y}$ having finite second moments, with marginals $\pi_{1}$ and $\pi_{2}$, respectively.
\end{definition}
The existence and uniqueness of the minimizer in (\ref{Wdefn}) is guaranteed. It is well-known \cite[Ch. 7]{villani2003topics} that $W(\pi_{1},\pi_{2})$ defines a metric on the manifold $\mathscr{D}_{2}$. This means that $W(\pi_{1},\pi_{2}) \geq 0$ with equality if and only if $\pi_{1}=\pi_{2}$, the symmetry: $W(\pi_{1},\pi_{2})=W(\pi_{2},\pi_{1})$, and that $W(\pi_{1},\pi_{2})$ satisfies the triangle inequality. Its square, $W^{2}(\pi_{1},\pi_{2})$ equals \cite{benamou2000computational} the minimum amount of work required to transport $\pi_{1}$ to $\pi_{2}$ (or equivalently, $\rho_{1}$ to $\rho_{2}$), and vice versa. For any PDF $\nu$, the function $\rho \mapsto W^{2}(\rho,\nu)$ is convex on $\mathscr{D}_{2}$, i.e., for any $\rho_{1},\rho_{2} \in \mathscr{D}_{2}$, and $0\leq \tau \leq 1$, we have
\begin{eqnarray}
W^{2}(\nu,(1-\tau)\rho_{1} + \tau\rho_{2}) \:\leq\:(1-\tau)W^{2}(\nu,\rho_{1}) + \tau W^{2}(\nu,\rho_{2}).
\label{IndividualConvexity}	
\end{eqnarray}

\begin{definition}(\textbf{Kullback-Leibler divergence})\label{KLdivLabel}
The Kullback-Leibler divergence, also known as relative entropy, between two probability measures $\differential\pi_{1}(\bx) := \rho_{1}(\bx)\differential\bx$ and 	$\differential\pi_{2}(\bm{y}) := \rho_{2}(\bm{y})\differential\bm{y}$, denoted as $D_{\rm{KL}}(\pi_{1} || \pi_{2})$, is defined as
\begin{align}
D_{\rm{KL}}(\pi_{1} || \pi_{2}) &= \displaystyle\int \left(\displaystyle\frac{\differential\pi_{1}}{\differential\pi_{2}}\right)\log\left(\displaystyle\frac{\differential\pi_{1}}{\differential\pi_{2}}\right)\differential\pi_{2} \notag\\
&= \displaystyle\int_{\mathbb{R}^{n}}\rho_{1}(\bx)\log\displaystyle\frac{\rho_{1}(\bx)}{\rho_{2}(\bx)}\differential\bx,\label{KLdefn}
\end{align}
where $\frac{\differential\pi_{1}}{\differential\pi_{2}}$ denotes the Radon-Nikodym derivative. Henceforth, we use the notational equivalence $D_{\rm{KL}}(\pi_{1} || \pi_{2}) \equiv D_{\rm{KL}}(\rho_{1} || \rho_{2})$.
\end{definition}
From Jensen's inequality, $D_{\rm{KL}}(\rho_{1} || \rho_{2}) \geq 0$; however, $D_{\rm{KL}}$ is not a metric since it is neither symmetric, nor does it satisfy the triangle inequality. The Kullback-Leibler divergence (\ref{KLdefn}) is jointly convex in $\rho_{1}$ and $\rho_{2}$.


\section{JKO Canonical Form}\label{JKOformsectionlabel}
The JKO canonical form refers to a continuous-time stochastic dynamics where the drift vector field is the gradient of a potential function. The potential can be state dependent, or can depend on both the state $\bx(t)$ and its joint PDF $\rho(\bx,t)$. In the former case, the associated flow of the joint state PDF is governed by a FPK PDE of the form (\ref{FPKoperator}) whereas in the latter case, the same is governed by the McKean-Vlasov integro-PDE. For both cases, the proposed gradient flow algorithms (Section \ref{MainResultSectionLabel}) will be able to compute the transient state PDFs $\rho(\bx,t)$. We next describe these canonical forms; generalization of our framework to systems \emph{not} in JKO canonical form will be given in Sections \ref{NumericalResultsSectionLabel} and \ref{ExtensionsSectionLabel}.

\subsection{FPK Gradient Flow}

Consider the It\^{o} SDE
\begin{eqnarray}
\differential\bx = -\nabla \psi\left(\bx\right)\differential t \: + \: \sqrt{2\beta^{-1}}\:\differential\bw	, \quad \bx(0) = \bx_{0},
\label{ItoGradient}
\end{eqnarray}
where the the drift potential $\psi : \mathbb{R}^{n} \mapsto (0,\infty)$, the diffusion coefficient $\beta > 0$, and the initial condition $\bx_{0} \sim \rho_{0}(\bx)$. For the sample path $\bx(t)$ dynamics given by the SDE (\ref{ItoGradient}), the flow of the joint PDF $\rho\left(\bm{x},t\right)$ is governed by the FPK PDE initial value problem
\begin{eqnarray}
\displaystyle\frac{\partial\rho}{\partial t} = \nabla\cdot\left(\rho\nabla \psi\right) \: + \: \beta^{-1}\Delta\rho, \quad \rho(\bx,0) = \rho_{0}(\bx).
\label{FPKgradient}	
\end{eqnarray}
It is easy to verify that the unique stationary solution of (\ref{FPKgradient}) is the Gibbs PDF $\rho_{\infty}(\bm{x}) = \kappa \exp\left(-\beta \psi(\bm{x})\right)$, where the normalizing constant $\kappa$ is known as the \emph{partition function}. 

A Lyapunov functional associated with (\ref{FPKgradient}) is the \emph{free energy}
\begin{eqnarray}
F(\rho) &:=& \mathbb{E}_{\rho}\left[\psi + \beta^{-1}\log\rho \right] \nonumber\\
&=& \beta^{-1} D_{{\rm{KL}}}\left(\rho \parallel \exp\left(-\beta \psi(\bm{x})\right)\right)	\geq 0,
\label{FPKFreeEnergy}
\end{eqnarray}
that decays \cite{jordan1998variational} along the solution trajectory of (\ref{FPKgradient}). This follows from re-writing (\ref{FPKgradient}) as
\begin{eqnarray}
\displaystyle\frac{\partial{\rho}}{\partial t} = \nabla\cdot\left(\rho\nabla\zeta\right), \quad \text{where}\quad \zeta := \beta^{-1}\left(1 + \log\rho\right) + \psi,
\label{FPKadvectionform}	
\end{eqnarray}
and consequently
\begin{eqnarray}
\frac{\differential}{\differential t}F = -\mathbb{E}_{\rho}\left[\parallel \nabla\zeta \parallel_{2}^{2}\right],
\label{dFdt}	
\end{eqnarray}
which is $<0$ for the transient solution $\rho(\bx,t)$, and $=0$ at the stationary solution $\rho_{\infty}(\bx) = \kappa \exp(-\beta \psi(\bm{x}))$. In our context, (\ref{dFdt}) is an analogue of (\ref{EGRlyap}). Notice that the free energy (\ref{FPKFreeEnergy}) can be seen as the sum of the \emph{potential energy} $\int_{\mathbb{R}^{n}}\psi(\bx)\rho\:\differential\bx$ and the \emph{internal energy} $\beta^{-1}\int_{\mathbb{R}^{n}}\rho\log\rho\:\differential\bx$. If $\psi\equiv 0$, the PDE (\ref{FPKgradient}) reduces to the heat equation, which by (\ref{FPKFreeEnergy}), can then be interpreted as an entropy maximizing flow.

The seminal result of \cite{jordan1998variational} was that the transient solution of (\ref{FPKgradient}) can be computed via the proximal recursion (\ref{ProxRecursionInfiniteDim}) with the distance metric $d\equiv W$ (i.e., the 2-Wasserstein metric in (\ref{Wdefn})) and the functional $\Phi \equiv F$ (i.e., the free energy (\ref{FPKFreeEnergy})). Just as (\ref{EGRflow}) and (\ref{ProxEGD}) form a gradient flow-proximal recursion pair, likewise (\ref{FPKgradient}) and (\ref{ProxRecursionInfiniteDim}) form the same with the stated choices of $d$ and $\Phi$. From (\ref{Wdefn}), notice that the distance metric $W$ itself is defined as the solution of an optimization problem, hence it is not apparent how to numerically implement the recursion (\ref{ProxRecursionInfiniteDim}) in a scalable manner.


\subsection{McKean-Vlasov Gradient Flow}\label{subsecMVintro}
In addition to the drift and diffusion, if one has non-local interaction, then the corresponding PDF evolution equation becomes the McKean-Vlasov integro-PDE   
\begin{eqnarray}
\displaystyle\frac{\partial\rho}{\partial t} = \nabla\cdot\left(\rho\nabla\left(\psi + \rho * \phi\right)\right) \: + \: \beta^{-1}\Delta\rho, \quad \rho(\bx,0) = \rho_{0}(\bx),
\label{MVFPKgradient}	
\end{eqnarray}
where $*$ denotes the convolution in $\mathbb{R}^{n}$, the interaction potential $\phi : \mathbb{R}^{n} \mapsto (0,\infty)$ and is symmetric, i.e., $\phi(-\bm{v}) = \phi(\bm{v})$ for $\bm{v}\in\mathbb{R}^{n}$. The associated sample path $\bm{x}(t)$ dynamics has PDF-dependent drift:
\begin{eqnarray}
\differential\bx = -\left(\nabla \psi\left(\bx\right) + \nabla\left(\rho * \phi\right) \right)\differential t + \sqrt{2\beta^{-1}}\:\differential\bw, \quad \bx(0) = \bx_{0}.
\label{MVsde}	
\end{eqnarray}
As an example, when $\phi(\bm{v}):=\frac{1}{2}\parallel\bm{v}\parallel^{2}$, then $\nabla(\rho * \phi)(\bx) = \bm{x} - \mathbb{E}_{\rho}[\bm{x}]$.
Clearly, (\ref{MVFPKgradient}) reduces to (\ref{FPKgradient}) in the absence of interaction ($\phi\equiv 0$). The McKean-Vlasov equation serves as a model for coupled multi-agent interaction in applications such as crowd movement \cite{colombo2012nonlocal}, opinion dynamics \cite{gomez2012bounded,askarzadeh2019}, population biology and communication systems.

A Lyapunov functional for (\ref{MVFPKgradient}) can be obtained \cite{villani2004trend} by generalizing the free energy (\ref{FPKFreeEnergy}) as
\begin{eqnarray}
F(\rho) := \mathbb{E}_{\rho}\left[\psi \: + \: \beta^{-1}\log\rho \: + \: \rho * \phi\right],
\label{MVFreeEnergy}	
\end{eqnarray}
which is a sum of the potential energy (as before), the internal energy (as before), and the \emph{interaction energy} $\frac{1}{2}\int_{\mathbb{R}^{2n}}\phi(\bx-\by)\rho(\bx)\rho(\by)\:\differential\bx\differential\by$. In this case, (\ref{dFdt}) holds with 
\begin{eqnarray}
	\zeta := \beta^{-1}\left(1 + \log\rho\right) + \psi + \rho * \phi,
	\label{zetaforMV}
\end{eqnarray}
which is often referred to as the \emph{entropy dissipation functional} \cite{carrillo2003kinetic}. As before, the proximal recursion (\ref{ProxRecursionInfiniteDim}) with $d\equiv W$ and $\Phi \equiv F$ (now $F$ given by (\ref{MVFreeEnergy})), approximates the flow (\ref{MVFPKgradient}).

\section{Framework}\label{MainResultSectionLabel}
We now describe our computational framework to solve the proximal recursion
\begin{subequations}
\begin{align}{}
\varrho_{k} &= {\prox}^{W}_{hF(\cdot)} (\varrho_{k-1}) \label{JKOscheme:a} \\
 &=  \underset{\varrho \in \mathscr{D}_2 }\arginf \ \frac{1}{2} W^2(\varrho_{k-1},\varrho) + h\: F(\varrho), \quad k\in\mathbb{N}, \label{JKOscheme:b}
 \end{align}
 \label{JKOscheme}
\end{subequations}
with $\varrho_{0} \equiv \rho_{0}(\bx)$ (the initial joint PDF) for some small step-size $h$. For maximal clarity, we develop the framework with the free energy $F(\cdot)$ as in (\ref{FPKFreeEnergy}), i.e., for FPK gradient flow. In Section \ref{NumericalResultsSectionLabel}.C, we will show how the same can be generalized when $F(\cdot)$ is of the form (\ref{MVFreeEnergy}). As per the convexity properties mentioned in Section \ref{IntroSection}.2, problem (\ref{JKOscheme}) involves (recursively) minimizing sum of two convex functionals, and hence is a convex problem for each $k\in\mathbb{N}$.

We discretize time as $t=0,h,2h, \hdots$, and develop an algorithm to solve (\ref{JKOscheme}) without making any spatial discretization. Specifically, we would like to perform the recursion (\ref{JKOscheme}) on weighted scattered point cloud $\{\bx_{k}^{i},\varrho_{k}^{i}\}_{i=1}^{N}$ of cardinality $N$ at $t_{k} = kh$, $k\in\mathbb{N}$, where the location of the $i$\textsuperscript{th} point $\bx_{k}^{i}\in\mathbb{R}^{n}$ denotes its state-space coordinate, and the corresponding weight $\varrho_{k}^{i} \in \mathbb{R}_{\geq 0}$ denotes the value of the joint PDF evaluated at that point at time $t_{k}$. Such weighted point cloud representation of (\ref{JKOscheme}) results in the following problem: 
\begin{align}
 \brh_k  =\underset{\brh}\argmin \bigg\{  \underset{\bm{M }\in \Pi(\brh_{k-1},\brh)} \min \frac{1}{2}\langle \bm{C}_{k},\bm{M}\rangle + h \: \langle \bm{\psi}_{k-1}\notag\\
 +\beta^{-1}\log\brh,\brh \rangle   \bigg\}, \quad k\in\mathbb{N},
 \label{FiniteSampleJKO}
\end{align}
where the drift potential vector $\bm{\psi}_{k-1} \in \mathbb{R}^{N}$ is given by
\begin{eqnarray}
\bm{\psi}_{k-1}(i) := \psi\left(\bm{x}_{k-1}^{i}\right), \quad i=1,\hdots,N.
\label{Discretepsidef}	
\end{eqnarray}
Here, the probability vectors $\brh, \brh_{k-1} \in \mathcal{S}_{N-1}$, the probability simplex in $\mathbb{R}^{N}$. Furthermore, for each $k\in\mathbb{N}$, the matrix $\bm{C}_{k}\in\mathbb{R}^{N\times N}$ is given by
\begin{eqnarray}
\bm{C}_{k}(i,j) := s(\bm{x}_{k}^{i};\bm{x}_{k-1}^{j}) = \parallel \bm{x}_{k}^{i} -  \bm{x}_{k-1}^{j} \parallel_{2}^{2}, \; i,j=1,\hdots,N,
\label{EDMdef}	
\end{eqnarray}
and $\Pi(\brh_{k-1},\brh)$ stands for the set of all matrices $\bm{M}\in\mathbb{R}^{N\times N}$ such that 
\begin{align}
\bm{M}\geq 0, \quad \bm{M}\bm{1} = \brh_{k-1}, \quad \bm{M}^{\top}\bm{1} = \brh.
\label{CouplingMatrixConstr}
\end{align}
Notice that the inner minimization in  (\ref{FiniteSampleJKO}) is a standard linear programming problem if it were to be solved for a given $\brh\in\mathcal{S}_{N-1}$, as in the Monge-Kantorovich optimal mass transport \cite{villani2003topics}. However, the outer minimization in  (\ref{FiniteSampleJKO}) precludes a direct numerical approach.

To circumvent the aforesaid issues, following \cite{karlsson2017generalized}, we first regularize and then dualize (\ref{FiniteSampleJKO}). Specifically, adding an entropic regularization $H(\bm{M}) := \langle\bm{M}, \log\bm{M}\rangle$ in (\ref{FiniteSampleJKO}) yields
\begin{align}
\brh_k = \underset{\brh}\argmin \bigg\{  \underset{\bm{M} \in \Pi(\brh_{k-1},\brh)} \min \frac{1}{2}\langle \bm{C}_{k},\bm{M}\rangle +
\epsilon H(\bm{M}) \notag\\
+ h \: \langle \bm{\psi}_{k-1}+\beta^{-1}\log\brh,\brh \rangle   \bigg\},
\label{EntropyRegJKO}
\end{align}
where $\epsilon>0$ is a regularization parameter. The entropic regularization is standard in optimal mass transport literature \cite{cuturi2013sinkhorn,benamou2015iterative} and leads to efficient Sinkhorn iteration for the inner minimization. Here we point out that there has been parallel work \cite{chen2016entropic,chen2016relation} on the connection between optimal mass transport and the so called Schr\"{o}dinger bridge problem which is a dynamic version of this type of regularization.

In our context, the entropic regularization ``algebrizes" the inner minimization in the sense if $\bm{\lambda}_{0},\bm{\lambda}_{1}\in\mathbb{R}^{N}$ are Lagrange multipliers associated with the equality constraints in (\ref{CouplingMatrixConstr}), then the optimal coupling matrix $\bm{M}^{\rm{opt}} := [m^{\rm{opt}}(i,j)]_{i,j=1}^{N}$ in (\ref{EntropyRegJKO}) has the Sinkhorn form
\begin{align}
m^{\rm{opt}}(i,j) = \exp\left(\bm{\lambda}_{0}(i)h/\epsilon\right) \exp\left(-\bm{C}_{k}(i,j)/(2\epsilon)\right) \notag\\
\exp\left(\bm{\lambda}_{1}(j)h/\epsilon\right).
\label{SinkhornFormInnerArgmin}	
\end{align}
Since the objective in (\ref{EntropyRegJKO}) is proper convex and lower semi-continuous in $\brh$, the \emph{strong duality} holds, and we consider the Lagrange dual of (\ref{EntropyRegJKO}) given by:
\begin{align}
\left(\bm{\lambda}_0^{\rm{opt}},\bm{\lambda}_1^{\rm{opt}}\right) 
=\underset{\bm{\lambda}_0,\bm{\lambda}_1\in\mathbb{R}^{N}}\argmax \bigg \{  \langle \bm{\lambda}_0,\brh_{k-1}  \rangle 
- F^{\star}(-\bm{\lambda}_1) \notag\\
-\frac{\epsilon }{h} \bigg( \exp({\bm{\lambda}}_{0}^{\top} h / \epsilon ) \exp(-\bm{C}_{k}/2 \epsilon) \exp({\bm{\lambda}}_1h / \epsilon )\bigg)\bigg \}, 
\label{DualJKO}
\end{align} 
where 
\begin{eqnarray}
F^{\star}(\cdot) := \underset{\vartheta}{\sup}\:\{\langle \cdot, \vartheta\rangle \: - \: F(\vartheta)\}
\label{LFtransform}	
\end{eqnarray}
is the \emph{Legendre-Fenchel conjugate} of the free energy $F$ in (\ref{FPKFreeEnergy}). 
Next, we derive the first order optimality conditions for (\ref{DualJKO}) resulting in the proximal updates, and then provide an algorithm to solve the same.

\subsection{Proximal Recursions}
Given the vectors $\brh_{k-1}, \bm{\psi}_{k-1}$, the matrix $\bm{C}_{k}$, and the positive scalars $\beta, h, \epsilon$ in (\ref{DualJKO}), let
\begin{alignat}{3}
&\bm{y}:=\exp({\bm{\lambda}}_0h / \epsilon ), \quad &&\bm{z}:=\exp({\bm{\lambda}}_{1}h / \epsilon ), \label{yzmaps}\\
&\bm{\Gamma}_{k} := {\exp(-\bm{C}_{k}/2\epsilon}), \quad &&\bm{\xi}_{k-1} := \exp(-\beta \bm{\psi}_{k-1} - \bm{1}). \label{Gammaxidef}
\end{alignat}
The following result establishes a system of nonlinear equations for computing $\bm{\lambda}_0^{\rm{opt}},\bm{\lambda}_1^{\rm{opt}}$ in (\ref{DualJKO}), and consequently $\brh_{k}$ in (\ref{EntropyRegJKO}).
\begin{thm}\label{FixedPtRecursionThm}
The vectors $\bm{\lambda}_0^{\rm{opt}},\bm{\lambda}_1^{\rm{opt}}$ in (\ref{DualJKO}) can be found by solving for $\bm{y},\bm{z}\in\mathbb{R}^{N}$ from the following system of equations:
\begin{subequations}
\begin{align}
\bm{y}\odot \left(\bm{\Gamma}_{k}\bm{z}\right) &=\bm{\rho}_{k-1}, \label{FixedPtRecursion:a}\\
\bm{z}\odot \left({\bm{\Gamma}_{k}}^{\top} \bm{y}\right) &=\bm{\xi}_{k-1}\odot \bm{z}^{-\frac{\beta\epsilon}{h}}, \label{FixedPtRecursion:b}
\end{align}
\label{FixedPtRecursion}
\end{subequations}
and then inverting the maps (\ref{yzmaps}). Let $\left(\bm{y}^{\rm{opt}},\bm{z}^{\rm{opt}}\right)$ be the solution of (\ref{FixedPtRecursion}). The vector $\brh_{k}$ in (\ref{EntropyRegJKO}), i.e., the proximal update (Fig. \ref{fig:ProxSchematic}) can then be obtained as
\begin{align}
\brh_{k} = \bm{z}^{\rm{opt}}\odot \left({\bm{\Gamma}_{k}}^{\top} \bm{y}^{\rm{opt}}\right).
\label{ProxUpdate}
\end{align}
\end{thm}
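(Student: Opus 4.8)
The plan is to obtain the two equations (\ref{FixedPtRecursion:a})--(\ref{FixedPtRecursion:b}) as the first-order stationarity conditions of the concave dual problem (\ref{DualJKO}), and then to read off the update (\ref{ProxUpdate}) from the column-marginal constraint satisfied by the optimal Sinkhorn coupling (\ref{SinkhornFormInnerArgmin}). Since the excerpt already asserts strong duality for (\ref{EntropyRegJKO}), I would not re-derive the dual; I would simply differentiate the dual objective in (\ref{DualJKO}) in $\bm{\lambda}_0$ and $\bm{\lambda}_1$ and set each gradient to zero. \textbf{First condition:} writing the last term of (\ref{DualJKO}) as $-\tfrac{\epsilon}{h}\,\bm{y}^{\top}\bm{\Gamma}_{k}\bm{z}$ via the substitutions (\ref{yzmaps})--(\ref{Gammaxidef}), and using $\partial y_{i}/\partial\bm{\lambda}_{0}(i) = (h/\epsilon)\,y_{i}$, the $\bm{\lambda}_0$-gradient of the dual objective equals $\brh_{k-1} - \bm{y}\odot(\bm{\Gamma}_{k}\bm{z})$; setting it to zero yields (\ref{FixedPtRecursion:a}) at once.

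\textbf{Second condition:} the term $-F^{\star}(-\bm{\lambda}_1)$ is the only nontrivial piece. Differentiating it in $\bm{\lambda}_1$ gives $\nabla F^{\star}(-\bm{\lambda}_1)$, while the bilinear term contributes $-\bm{z}\odot(\bm{\Gamma}_{k}^{\top}\bm{y})$ exactly as above. The key step is to evaluate $\nabla F^{\star}$ in closed form. In the weighted-point-cloud representation the free energy (\ref{FPKFreeEnergy}) reads $F(\brh) = \langle\bm{\psi}_{k-1},\brh\rangle + \beta^{-1}\langle\brh,\log\brh\rangle$, a separable, strictly convex function. Its conjugate (\ref{LFtransform}) is computed entrywise: maximizing $u_{i}\rho_{i} - \psi_{k-1}(i)\rho_{i} - \beta^{-1}\rho_{i}\log\rho_{i}$ over $\rho_{i}$ gives the maximizer $\rho_{i}^{\star} = \exp(\beta u_{i} - \beta\psi_{k-1}(i) - 1)$. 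By Danskin's (envelope) theorem, $\nabla F^{\star}(\bm{u})$ equals this maximizer, so $\nabla F^{\star}(-\bm{\lambda}_1) = \exp(-\beta\bm{\lambda}_1)\odot\exp(-\beta\bm{\psi}_{k-1}-\bm{1}) = \bm{\xi}_{k-1}\odot\exp(-\beta\bm{\lambda}_1)$. Inverting (\ref{yzmaps}) gives $\bm{\lambda}_1 = (\epsilon/h)\log\bm{z}$, hence $\exp(-\beta\bm{\lambda}_1) = \bm{z}^{-\beta\epsilon/h}$; substituting and equating to $\bm{z}\odot(\bm{\Gamma}_{k}^{\top}\bm{y})$ produces (\ref{FixedPtRecursion:b}).

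\textbf{Proximal update:} once $(\bm{y}^{\rm{opt}},\bm{z}^{\rm{opt}})$ solve (\ref{FixedPtRecursion}), I would recover $\brh_{k}$ from the column marginal of the optimal coupling. From (\ref{SinkhornFormInnerArgmin}), $m^{\rm{opt}}(i,j) = y_{i}^{\rm{opt}}\,\bm{\Gamma}_{k}(i,j)\,z_{j}^{\rm{opt}}$, and the constraint $\bm{M}^{\top}\bm{1} = \brh$ in (\ref{CouplingMatrixConstr}) gives $\brh_{k}(j) = \sum_{i} m^{\rm{opt}}(i,j) = z_{j}^{\rm{opt}}(\bm{\Gamma}_{k}^{\top}\bm{y}^{\rm{opt}})_{j}$, which is exactly (\ref{ProxUpdate}). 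Symmetrically, the row-marginal constraint $\bm{M}\bm{1} = \brh_{k-1}$ reproduces (\ref{FixedPtRecursion:a}), confirming internal consistency.

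I expect the only genuine obstacle to be the rigorous justification of the conjugate-gradient step: one must confirm that $F$ is proper, lower semicontinuous and strictly convex so that the entrywise maximizer is unique and $\nabla F^{\star}$ is single-valued and given by Danskin's theorem, and one must track whether the probability-simplex normalization of $\brh$ requires a separate scalar multiplier or is subsumed by the $-\bm{1}$ shift inside $\bm{\xi}_{k-1}$ in (\ref{Gammaxidef}). Everything else is routine differentiation combined with the change of variables (\ref{yzmaps})--(\ref{Gammaxidef}).
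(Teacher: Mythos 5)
Your proposal is correct and follows essentially the same route as the paper: both obtain (\ref{FixedPtRecursion:a})--(\ref{FixedPtRecursion:b}) as the stationarity conditions of the dual (\ref{DualJKO}) and read off (\ref{ProxUpdate}) from the column-marginal constraint on the Sinkhorn coupling (\ref{SinkhornFormInnerArgmin}). The only cosmetic difference is that you invoke Danskin's theorem to identify $\nabla F^{\star}(-\bm{\lambda}_1)$ with the entrywise maximizer, whereas the paper substitutes the maximizer back to get the explicit expression $F^{\star}(\bm{\lambda}) = \beta^{-1}\bm{1}^{\top}\exp(\beta(\bm{\lambda}-\bm{\psi})-\bm{1})$ and then differentiates; the two computations coincide, and the paper likewise omits a separate multiplier for the simplex normalization.
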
 
\begin{proof}
From (\ref{FPKFreeEnergy}), the ``discrete free energy" is
\[F(\bm{\brh}) = \langle \bm{\psi}+\beta^{-1}\log\brh,\brh \rangle.\] 
Its Legendre-Fenchel conjugate, by (\ref{LFtransform}), is given by
\begin{align}
F^{\star}(\bm{\lambda}) = \underset{\brh} \sup  \big \{{ \bm{\lambda}^{\top} \bm{\brh} -  \bm{\psi}^{\top} \bm{\brh}-\beta^{-1} \bm{\brh}^{\top} \log \bm{\brh} } \big \}. 
\label{DiscreteLF}
\end{align}
Setting the gradient of the objective function in $ (\ref{DiscreteLF})$ w.r.t. $\brh$ to zero, and solving for $\brh$ yields
\begin{align}
\brh_{\max} =  \exp(\beta(\bm{\lambda}-\bm{\psi})-\bm{1}). 
\label{rhomax}
\end{align}
Substituting (\ref{rhomax}) back into (\ref{DiscreteLF}), results
\begin{align}
F^{\star}(\bm{\lambda}) = \beta^{-1}\bm{1}^{\top} \exp(\beta(\bm{\lambda}-\bm{\psi})-\bm{1}). 
\label{explicitLegendre}
\end{align}
Fixing $\bm{\lambda}_1$, and taking the gradient of the objective in (\ref{DualJKO}) w.r.t. $\bm{\lambda}_0$, gives (\ref{FixedPtRecursion:a}). Likewise, fixing $\bm{\lambda}_0$, and taking the gradient of the objective in (\ref{DualJKO}) w.r.t. $\bm{\lambda}_1$ gives 
\begin{align}
\nabla_{\bm{\lambda}_1} F^{\star}(-{\bm{\lambda}_1} )=\bm{z}\odot \left({\bm{\Gamma}_{k}}^{\top} \bm{y}\right). 
\label{gradrecur}
\end{align}
Using (\ref{explicitLegendre}) to simplify the left-hand-side of (\ref{gradrecur}) results in (\ref{FixedPtRecursion:b}). To derive (\ref{ProxUpdate}), notice that combining the last equality constraint in (\ref{CouplingMatrixConstr}) with (\ref{SinkhornFormInnerArgmin}), (\ref{yzmaps}) and (\ref{Gammaxidef}) gives \[\brh_{k} = (\bm{M}^{\rm{opt}})^{\top}\bm{1} = \sum_{j=1}^{N} m^{\rm{opt}}(j,i) = \bm{z}(i)\sum_{j=1}^{N}\bm{\Gamma}_{k}(j,i)\bm{y}(j),\] 
which is equal to $\bm{z}\odot\bm{\Gamma}_{k}^{\top}\bm{y}$, as claimed.
\end{proof}
In the following (Section \ref{SubsecAlgorithm}), we propose an algorithm to solve (\ref{FixedPtRecursion}) and (\ref{ProxUpdate}), and then outline the overall implementation of our computational framework. The convergence results for the proposed algorithm are given in Section \ref{SubsecConvergence}. 

\subsection{Algorithm} \label{SubsecAlgorithm}

\subsubsection{Proximal algorithm}

We now propose a block co-ordinate iteration scheme to solve (\ref{FixedPtRecursion}). The proposed procedure, which we call \textproc{ProxRecur}, and detail in Algortihm \ref{ProxRecur}, takes $\brh_{k-1}$ as input and returns the proximal update $\brh_{k}$ as output for $k\in\mathbb{N}$. In addition to the data $\brh_{k-1}, \bm{\psi}_{k-1}, \bm{C}_{k}, \beta, h, \epsilon, N$, the Algorithm \ref{ProxRecur} requires two parameters as user input: numerical tolerance $\delta$, and maximum number of iterations $L$. The computation in Algorithm \ref{ProxRecur}, as presented, involves making an initial guess for the vector $\bm{z}$ and then updating $\bm{y}$ and $\bm{z}$ until convergence. The iteration over index $\ell \leq L$ is performed while keeping the physical time ``frozen".
\begin{algorithm}
    \caption{Proximal recursion to compute $\brh_{k}$ from $\brh_{k-1}$}
    \label{ProxRecur}
    \begin{algorithmic}[1] 
        \Procedure{ProxRecur}{$\brh_{k-1}$, $\bm{\psi}_{k-1}$, $\bm{C}_{k}$, $\beta$, $h$, $\epsilon$, $N$, $\delta$, $L$}
            \State $\bm{\Gamma}_{k}\gets {\exp(-\bm{C}_{k}/2\epsilon})$
            \State $\bm{\xi} \gets \exp(-\beta \bm{\psi}_{k-1} - \bm{1})$
            
            \State $\bm{z}_{0} \gets {\rm{rand}}_{N\times 1}$\Comment{initialize}
            \State $\bm{z} \gets \left[\bm{z}_{0}, \bm{0}_{N\times (L-1)}\right]$ 
            \State $\bm{y} \gets \left[\brh_{k-1} \oslash \left(\bm{\Gamma}_{k}\bm{z}_{0}\right), \bm{0}_{N\times (L-1)}\right]$
            \State $\ell=1$ \Comment{iteration index}
            
            \While{$\ell \leq L$}
                \State $\bm{z}(:,\ell+1) \gets \left(\bm{\xi}_{k-1} \oslash \left(\bm{\Gamma}_{k}^{\top}\bm{y}(:,\ell)\right)\right)^{\frac{1}{1+\beta\epsilon/h}}$
                \State $\bm{y}(:,\ell+1) \gets \bm{\brh}_{k-1} \oslash \left( \bm{\Gamma}_{k} \bm{z}(:,\ell+1)\right) $
                \If{$\parallel \bm{y}(:,\ell+1) - \bm{y}(:,\ell)\parallel < \delta \;\And\; \parallel \bm{z}(:,\ell+1)- \bm{z}(:,\ell)\parallel < \delta$ } \Comment{error within tolerance}
                \State break
                \Else
                \State $\ell \gets \ell + 1$
                \EndIf
            \EndWhile\label{euclidendwhile}
            \State \textbf{return} $\brh_{k} \gets \bm{z}(:,\ell) \odot \left( \bm{\Gamma}_{k}^{\top} \bm{y}(:,\ell) \right)$\Comment{proximal update}
        \EndProcedure
    \end{algorithmic}
\end{algorithm}
\vspace*{-0.1in}

We next outline the overall algorithmic setup to implement the proximal recursion over probability weighted scattered data.

\begin{figure}[h]
\centering
\includegraphics[width=.65\linewidth]{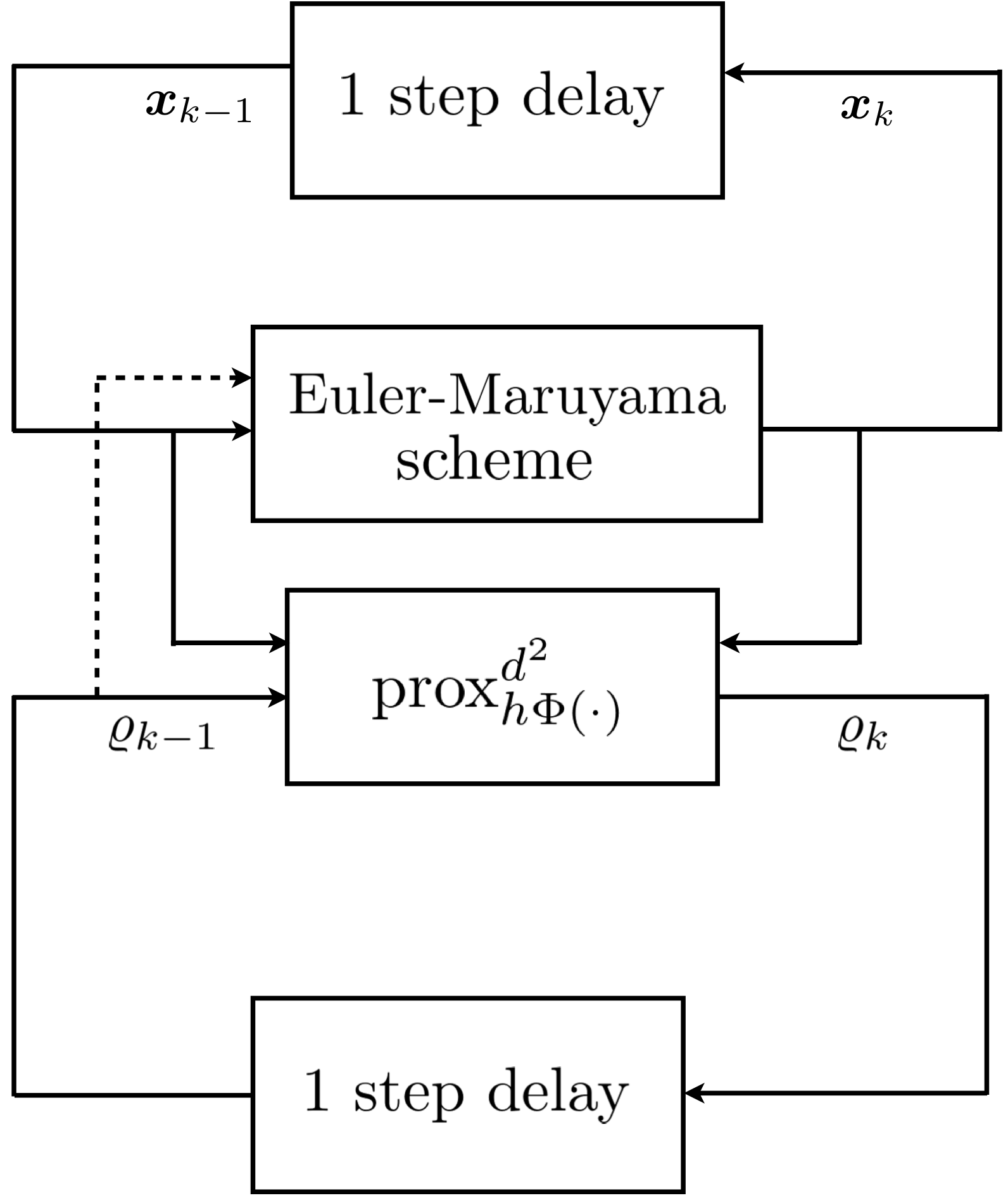}
\caption{\small{Schematic of the proposed algorithmic setup for propagating the joint state PDF as probability weighted scattered point cloud $\{\bx_{k}^{i},\varrho_{k}^{i}\}_{i=1}^{N}$. The location of the points $\{\bx_{k}^{i}\}_{i=1}^{N}$ are updated via Euler-Maruyama scheme; the corresponding probability weights are updated via Algorithm \ref{ProxRecur}. The dashed arrow shown above is present only when the state dynamics is density dependent, as in (\ref{MVsde}).
}}
\vspace*{-0.1in}
\label{BlockDiagm}
\end{figure}

\subsubsection{Overall algorithm}

Samples from the \emph{known} initial joint PDF $\rho_{0}$ are generated as point cloud $\{\bx_{0}^{i},\varrho_{0}^{i}\}_{i=1}^{N}$. Then for $k\in\mathbb{N}$, the point clouds $\{\bx_{k}^{i},\varrho_{k}^{i}\}_{i=1}^{N}$ are updated as shown in Fig. \ref{BlockDiagm}. Specifically, the state vectors are updated via Euler-Maruyama scheme applied to the underlying SDE (\ref{ItoGradient}) or (\ref{MVsde}). Explicitly, the Euler-Maruyama update for (\ref{MVsde}) is
\begin{eqnarray}
\bx_{k}^{i} = \bx_{k-1}^{i}-h\nabla\left(\psi(\bx_{k-1}^{i}) + \omega(\bx_{k-1}^{i})\right)\nonumber\\
+ \sqrt{2\beta^{-1}}\left(\bm{w}_{k}^{i} - \bm{w}_{k-1}^{i}\right),
\label{EulerMaruyamaMV}	
\end{eqnarray}
where $\omega(\cdot) := \int\phi(\cdot - \bm{y})\rho(\bm{y})\differential\bm{y}$, and $\bm{w}_{k}^{i}:=\bm{w}^{i}(t=kh)$, $k\in\mathbb{N}$. The same for (\ref{ItoGradient}) is obtained by setting $\phi \equiv \omega \equiv 0$ in (\ref{EulerMaruyamaMV}).

The corresponding probability weights $\{\brh_{k}^{i}\}_{i=1}^{N}$ are updated via Algorithm \ref{ProxRecur}. Notice that computing $\bm{C}_{k}$ requires both $\{\bm{x}_{k-1}^{i}\}_{i=1}^{N}$ and $\{\bm{x}_{k}^{i}\}_{i=1}^{N}$, and that $\bm{C}_{k}$ needs to be passed as input to Algorithm \ref{ProxRecur}. Thus, the execution of Euler-Maruyama scheme precedes that of Algorithm \ref{ProxRecur}.
\begin{figure*}[t]
\centering
\includegraphics[width=.98\linewidth]{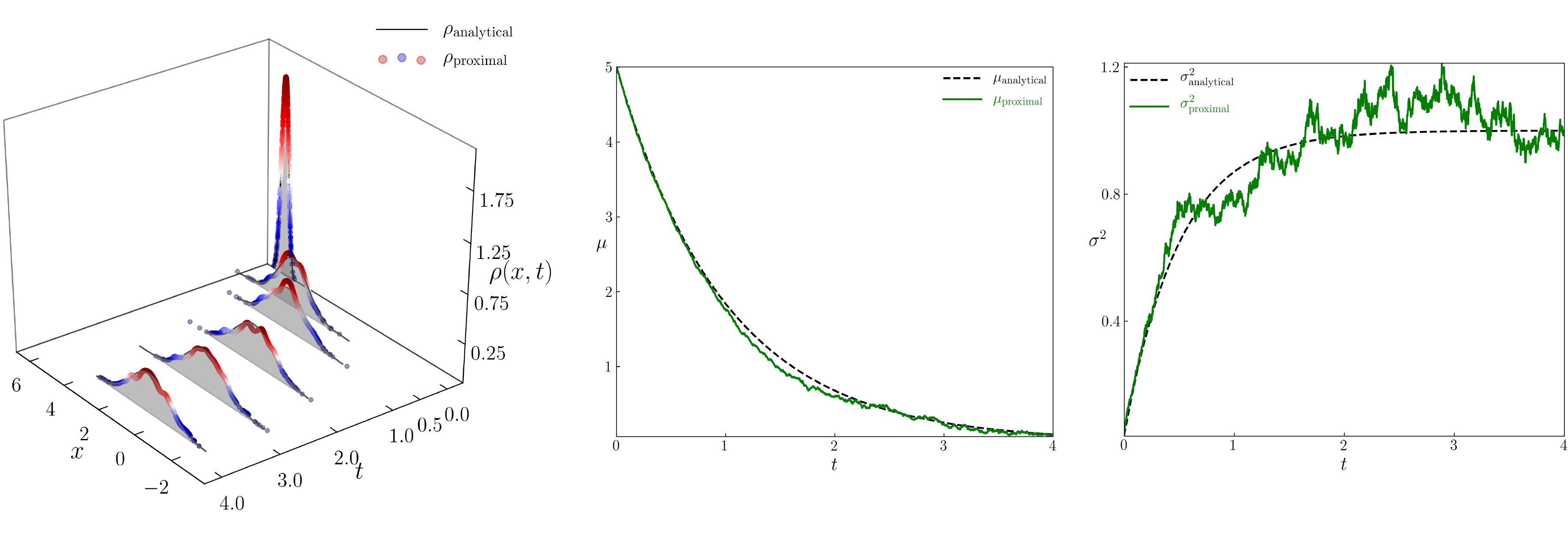}
\vspace*{-0.15in}
\caption{\small{Comparison of the analytical and proximal solutions of the FPK PDE for (\ref{OU}) with time step $h=10^{-3}$, and with parameters $a=1$, $\beta=1$, $\epsilon = 5\times 10^{-2}$. Shown above are the time evolution of the (\emph{left}) PDFs, (\emph{middle}) means, and (\emph{right}) variances.}}
\vspace*{-0.1in}
\label{1dOU}
\end{figure*}

\begin{remark}
Our choice of the (explicit) Euler-Maruyama scheme for updating the location of the points in state space was motivated by its simplicity and ease of implementation.	Since the diffusion coefficient of (\ref{ItoGradient}) or (\ref{MVsde}) is constant, the Euler-Maruyama scheme is guaranteed to converge strongly to the true solution of the corresponding SDE provided the drift coefficient is globally Lipschitz; see e.g., \cite[Ch. 10.2]{kloeden2013numerical}. If the drift coefficient $-\nabla\psi$ in (\ref{ItoGradient}), or $-\left(\nabla \psi + \nabla\left(\rho * \phi\right)\right)$ in (\ref{MVsde}), is non-globally Lipschitz with superlinear growth, then the Euler-Maruyama scheme is known \cite{hutzenthaler2010strong} to diverge in mean-squared error sense. In such cases, one could replace the explicit Euler-Maruyama scheme with the ``tamed" Euler-Maruyama scheme \cite{hutzenthaler2012strong}, or with the partially implicit Euler scheme \cite{hutzenthaler2015numerical}, or with the split-step backward Euler scheme \cite{higham2002strong}.
\end{remark}

\subsection{Convergence} \label{SubsecConvergence}
Next, we will prove the convergence properties for Algorithm \ref{ProxRecur}. To this end, the Definition \ref{DefnThompson} and Proposition \ref{PropOrderPreservingMap} given below will be useful in establishing Theorem \ref{ThmConvergence} that follows. 
 
\begin{definition}(\textbf{Thompson metric})
\label{DefnThompson}
Consider $\bm{z}, \widetilde{\bm{z}} \in \mathcal{K}$, where $\mathcal{K}$ is a non-empty open convex cone. Further, suppose that $\mathcal{K}$ is a normal cone, i.e., there exists constant $\alpha$ such that $\parallel \bm{z} \parallel \leq \alpha \parallel \widetilde{\bm{z}} \parallel$ for $\bm{z} \leq \widetilde{\bm{z}}$. Thompson \cite{thompson1963certain} proved that	$\mathcal{K}$ is a complete metric space w.r.t. the so-called \emph{Thompson metric} given by
\[d_{\rm{T}}\left(\bm{z},\widetilde{\bm{z}}\right) := \max\{\log \gamma(\bm{z}/\widetilde{\bm{z}}), \log \gamma(\widetilde{\bm{z}}/\bm{z})\},\]
where $\gamma(\bm{z}/\widetilde{\bm{z}}) := \inf\{c>0 \mid \bm{z} \leq c\widetilde{\bm{z}}\}$. In particular, if $\mathcal{K}\equiv \mathbb{R}^{N}_{>0}$ (positive orthant of $\mathbb{R}^{N}$), then
\begin{eqnarray}
d_{\rm{T}}\left(\bm{z},\widetilde{\bm{z}}\right) = \log\max\bigg\{\max_{i=1,\hdots,N}\left(\frac{\bm{z}_{i}}{\widetilde{\bm{z}}_{i}}\right), \max_{i=1,\hdots,N}\left(\frac{\widetilde{\bm{z}}_{i}}{\bm{z}_{i}}\right)\bigg\}.
\label{ThompsonPosOrthant}	
\end{eqnarray}
\end{definition}
\begin{proposition}\label{PropOrderPreservingMap}\cite[Proposition 3.2]{lim2009nonlinear},\cite{nussbaum1988hilbert}
Let $\mathcal{K}$ be an open, normal, convex cone, and let $\bm{p} : \mathcal{K} \mapsto \mathcal{K}$ be an order preserving homogeneous map of degree $r \geq 0$, i.e., $\bm{p}(c\bz) = c^{r}\bm{p}(\bm{z})$ for any $c>0$ and $\bm{z}\in\mathcal{K}$. Then, for all $\bm{z}, \widetilde{\bm{z}} \in \mathcal{K}$, we have
\[d_{\rm{T}}\left(\bm{p}(\bm{z}),\bm{p}(\widetilde{\bm{z}})\right) \leq r d_{\rm{T}}\left(\bm{z},\widetilde{\bm{z}}\right).\]
In particular, if $r\in[0,1)$, then the map $\bm{p}(\cdot)$ is strictly contractive in the Thompson metric $d_{\rm{T}}$, and admits unique fixed point in $\mathcal{K}$.
\end{proposition}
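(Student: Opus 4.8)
The plan is to exploit directly the two structural hypotheses — order preservation and homogeneity of degree $r$ — together with the variational description of the Thompson metric through the functional $\gamma$. First I would record the key attainment fact: for $\bm{z},\widetilde{\bm{z}}\in\mathcal{K}$ the infimum defining $\gamma(\bm{z}/\widetilde{\bm{z}}) = \inf\{c>0 \mid \bm{z}\leq c\widetilde{\bm{z}}\}$ is achieved, so that $\bm{z}\leq \gamma(\bm{z}/\widetilde{\bm{z}})\,\widetilde{\bm{z}}$. This follows because $\bm{z}\leq c\widetilde{\bm{z}}$ is equivalent to $c\widetilde{\bm{z}}-\bm{z}\in\overline{\mathcal{K}}$, and closedness of the cone lets the membership constraint pass to the limit as $c\downarrow\gamma(\bm{z}/\widetilde{\bm{z}})$.

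Next, writing $\gamma := \gamma(\bm{z}/\widetilde{\bm{z}})$ (which is strictly positive since $\bm{z},\widetilde{\bm{z}}\in\mathcal{K}$), I would apply the two hypotheses in sequence to the attained inequality $\bm{z}\leq\gamma\widetilde{\bm{z}}$: order preservation gives $\bm{p}(\bm{z})\leq \bm{p}(\gamma\widetilde{\bm{z}})$, and homogeneity of degree $r$ gives $\bm{p}(\gamma\widetilde{\bm{z}}) = \gamma^{r}\bm{p}(\widetilde{\bm{z}})$. Chaining these yields $\bm{p}(\bm{z})\leq\gamma^{r}\bm{p}(\widetilde{\bm{z}})$, hence, by the infimum characterization of $\gamma$, the contraction of the functional to its $r$-th power: $\gamma(\bm{p}(\bm{z})/\bm{p}(\widetilde{\bm{z}}))\leq\gamma^{r}=\gamma(\bm{z}/\widetilde{\bm{z}})^{r}$. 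Taking logarithms (monotone) and using $r\geq 0$ gives $\log\gamma(\bm{p}(\bm{z})/\bm{p}(\widetilde{\bm{z}}))\leq r\log\gamma(\bm{z}/\widetilde{\bm{z}})$. Exchanging the roles of $\bm{z}$ and $\widetilde{\bm{z}}$ produces the companion bound $\log\gamma(\bm{p}(\widetilde{\bm{z}})/\bm{p}(\bm{z}))\leq r\log\gamma(\widetilde{\bm{z}}/\bm{z})$. Since each right-hand side is dominated by $r\,d_{\rm{T}}(\bm{z},\widetilde{\bm{z}})$ — because $r\geq 0$ and every log-term is at most the maximum defining $d_{\rm{T}}$ — taking the maximum of the two left-hand sides delivers $d_{\rm{T}}(\bm{p}(\bm{z}),\bm{p}(\widetilde{\bm{z}}))\leq r\,d_{\rm{T}}(\bm{z},\widetilde{\bm{z}})$.

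For the final clause, when $r\in[0,1)$ the displayed inequality says $\bm{p}$ is a strict contraction on $(\mathcal{K},d_{\rm{T}})$, which by the normality hypothesis and Thompson's theorem (cited in Definition \ref{DefnThompson}) is a complete metric space; the Banach fixed point theorem then yields existence and uniqueness of a fixed point in $\mathcal{K}$.

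The step I expect to be the main obstacle is the attainment of the infimum, i.e., justifying $\bm{z}\leq\gamma(\bm{z}/\widetilde{\bm{z}})\,\widetilde{\bm{z}}$ rather than merely $\bm{z}\leq(\gamma+\varepsilon)\widetilde{\bm{z}}$ for every $\varepsilon>0$; this is precisely where the topological structure of the normal, closed cone enters, and without it the clean inequality $\gamma(\bm{p}(\bm{z})/\bm{p}(\widetilde{\bm{z}}))\leq\gamma^{r}$ would survive only with an $\varepsilon$-slack that one would then have to remove by a limiting argument. By contrast, the algebraic heart of the proof — that order preservation composed with degree-$r$ homogeneity shrinks the $\gamma$-functional to its $r$-th power — is short and requires no estimates once the attainment is secured.
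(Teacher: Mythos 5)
The paper does not actually prove this proposition --- it is imported from the cited references (Lim, Prop.~3.2; Nussbaum), so there is no in-paper proof to compare against. Your argument is correct and is precisely the standard proof from that literature: attain the infimum defining $\gamma(\bm{z}/\widetilde{\bm{z}})$ via closedness of $\overline{\mathcal{K}}$, push $\bm{z}\leq\gamma\widetilde{\bm{z}}$ through order preservation and degree-$r$ homogeneity to get $\bm{p}(\bm{z})\leq\gamma^{r}\bm{p}(\widetilde{\bm{z}})$, take logarithms, symmetrize, and invoke Banach's fixed point theorem on the complete space $(\mathcal{K},d_{\rm{T}})$. The one step you flag as a potential obstacle is not one: even without attainment, $\bm{z}\leq(\gamma+\varepsilon)\widetilde{\bm{z}}$ for every $\varepsilon>0$ gives $\gamma\left(\bm{p}(\bm{z})/\bm{p}(\widetilde{\bm{z}})\right)\leq(\gamma+\varepsilon)^{r}$, and letting $\varepsilon\downarrow 0$ recovers the same bound, so the limiting argument you describe closes the gap immediately.
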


Using (\ref{ThompsonPosOrthant}) and Proposition \ref{PropOrderPreservingMap}, we establish the convergence result below.
\begin{thm}\label{ThmConvergence}
Consider the notations in (\ref{yzmaps})-(\ref{Gammaxidef}), and those in Algorithm \ref{ProxRecur}. The iteration
\begin{align}
\bm{z}(:,\ell+1) &= \left(\bm{\xi}_{k-1} \oslash \left(\bm{\Gamma}_{k}^{\top}\bm{y}(:,\ell)\right)\right)^{\frac{1}{1+\beta\epsilon/h}}\nonumber\\
&= \left(\bm{\xi}_{k-1} \oslash \left(\bm{\Gamma}_{k}^{\top}\bm{\brh}_{k-1} \oslash \left( \bm{\Gamma}_{k} \bm{z}(:,\ell)\right)\right)\right)^{\frac{1}{1+\beta\epsilon/h}}
\label{ConvThmIter}	
\end{align}
for $\ell = 1, 2, \hdots$, is strictly contractive in the Thompson metric (\ref{ThompsonPosOrthant}) on $\mathbb{R}^{N}_{>0}$, and admits unique fixed point $\bm{z}^{\rm{opt}} \in \mathbb{R}^{N}_{>0}$. 
\end{thm}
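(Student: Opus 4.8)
The plan is to exhibit the iteration map in (\ref{ConvThmIter}) as a composition of elementary maps on the positive orthant $\mathbb{R}^{N}_{>0}$, and then apply Proposition \ref{PropOrderPreservingMap} after verifying that the composite map is order-preserving and homogeneous of some degree $r\in[0,1)$. First I would observe that $\mathbb{R}^{N}_{>0}$ is an open, normal, convex cone so that the Thompson-metric machinery applies, and that it is closed under each of the operations appearing in the iteration, so the map indeed sends $\mathbb{R}^{N}_{>0}$ into itself. Writing $\bm{z}(:,\ell)=\bm{z}$, the map is
\[
\bm{p}(\bm{z}) \;=\; \left(\bm{\xi}_{k-1} \oslash \left(\bm{\Gamma}_{k}^{\top}\bigl(\brh_{k-1} \oslash (\bm{\Gamma}_{k}\bm{z})\bigr)\right)\right)^{\frac{1}{1+\beta\epsilon/h}}.
\]

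The key is to track how each stage behaves under the two relevant structures, monotonicity (order preservation) and scaling (homogeneity). I would process the composition from the innermost operation outward. Multiplication by $\bm{\Gamma}_{k}$ (a strictly positive matrix) is order-preserving and homogeneous of degree $+1$. Hadamard division of the fixed positive vector $\brh_{k-1}$ by a positive vector, $\bm{v}\mapsto \brh_{k-1}\oslash\bm{v}$, is order-reversing and homogeneous of degree $-1$. A second multiplication by $\bm{\Gamma}_{k}^{\top}$ is again order-preserving, degree $+1$; composing these three gives an order-reversing, degree-$(-1)$ map. Dividing $\bm{\xi}_{k-1}$ by that, another order-reversing degree-$(-1)$ step, restores order preservation and leaves a map homogeneous of degree $(-1)\times(-1)=+1$ before the final exponent. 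Raising componentwise to the power $\tfrac{1}{1+\beta\epsilon/h}$ preserves order and multiplies the homogeneity degree by $\tfrac{1}{1+\beta\epsilon/h}$. Hence the overall degree is
\[
r \;=\; \frac{1}{1+\beta\epsilon/h},
\]
and since $\beta,\epsilon,h>0$ we have $\beta\epsilon/h>0$, so $r\in(0,1)\subset[0,1)$.

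The main point to get right is the bookkeeping of composing order-reversing maps: order reversal composed with order reversal yields order preservation, and two order reversals are needed here (the two Hadamard divisions), so the net map is genuinely order-preserving, which is the hypothesis Proposition \ref{PropOrderPreservingMap} requires. I would record, as a short lemma or inline remark, the elementary facts that (i) a positive matrix acting on $\mathbb{R}^{N}_{>0}$ is order-preserving and degree-one homogeneous, (ii) $\bm{v}\mapsto \bm{a}\oslash\bm{v}$ for fixed $\bm{a}>0$ is order-reversing and degree-$(-1)$ homogeneous, and (iii) componentwise powers $\bm{v}\mapsto\bm{v}^{s}$ for $s>0$ are order-preserving and degree-$s$ homogeneous. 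With these in hand the degree computation is routine arithmetic on exponents and the monotonicity claim is a parity count.

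The hard part is really the verification that $\bm{p}$ maps $\mathbb{R}^{N}_{>0}$ strictly into $\mathbb{R}^{N}_{>0}$ (so that the division steps are well defined and no coordinate degenerates to $0$ or $\infty$): this needs $\bm{\xi}_{k-1}>0$, which holds since $\bm{\xi}_{k-1}=\exp(-\beta\bm{\psi}_{k-1}-\bm{1})>0$ elementwise, and $\bm{\Gamma}_{k}>0$ entrywise because $\bm{\Gamma}_{k}=\exp(-\bm{C}_{k}/2\epsilon)$, so $\bm{\Gamma}_{k}\bm{z}>0$ and $\bm{\Gamma}_{k}^{\top}\bm{y}>0$ for any positive input. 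Given all this, Proposition \ref{PropOrderPreservingMap} with $r=\tfrac{1}{1+\beta\epsilon/h}\in[0,1)$ immediately yields that $\bm{p}$ is strictly contractive in the Thompson metric (\ref{ThompsonPosOrthant}) and, since $(\mathbb{R}^{N}_{>0},d_{\rm{T}})$ is complete, admits a unique fixed point $\bm{z}^{\rm{opt}}\in\mathbb{R}^{N}_{>0}$ by the Banach fixed-point theorem, completing the proof.
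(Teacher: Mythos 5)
Your proposal is correct, and it reaches the conclusion by a route that is related to but cleaner than the paper's. The paper first rewrites the iteration by pulling $\bm{\Gamma}_{k}\bm{z}(:,\ell)$ out of the nested Hadamard division, obtaining $\bm{z}(:,\ell+1)=\left(\bm{\eta}\odot(\bm{\Gamma}_{k}\bm{z}(:,\ell))\right)^{r}$ with $\bm{\eta}:=\bm{\xi}_{k-1}\oslash(\bm{\Gamma}_{k}^{\top}\brh_{k-1})$, and then treats the three factors with three separate arguments: the positive linear map $\bm{z}\mapsto\bm{\Gamma}_{k}\bm{z}$ is handled via Perron--Frobenius theory, the Hadamard multiplication by $\bm{\eta}$ is observed to be a Thompson isometry, and only the final componentwise power $\bm{z}\mapsto\bm{z}^{r}$ with $r=1/(1+\beta\epsilon/h)\in(0,1)$ is where Proposition \ref{PropOrderPreservingMap} is invoked to get strict contractivity. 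You instead keep the iteration in its original nested form, verify that the full composite $\bm{p}$ is order-preserving (two order reversals from the two divisions cancel) and positively homogeneous of degree $r$ (the degrees $+1,-1,+1,-1,r$ multiply to $r$), and apply Proposition \ref{PropOrderPreservingMap} once to $\bm{p}$ itself. This buys two things: you avoid the paper's rewriting step, which as literally stated conflates matrix multiplication with an elementwise operation (i.e.\ $\bm{\Gamma}_{k}^{\top}(\bm{a}\oslash\bm{b})\neq(\bm{\Gamma}_{k}^{\top}\bm{a})\oslash\bm{b}$ in general), and you obtain the explicit Lipschitz constant $r=1/(1+\beta\epsilon/h)$ for the whole map in one stroke, rather than chaining a strict contraction with a nonexpansive linear map and an isometry. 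Your attention to well-definedness ($\bm{\xi}_{k-1}>0$ and $\bm{\Gamma}_{k}>0$ entrywise so that no division degenerates) and the final appeal to completeness of $(\mathbb{R}^{N}_{>0},d_{\rm{T}})$ plus Banach's theorem match the paper's conclusion exactly.
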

\begin{proof}
Rewriting (\ref{ConvThmIter}) as
\begin{eqnarray*}
\bm{z}(:,\ell+1) = \left(\left(\bm{\xi}_{k-1} \oslash \left(\bm{\Gamma}_{k}^{\top}\bm{\brh}_{k-1}\right)\right) \odot \left( \bm{\Gamma}_{k} \bm{z}(:,\ell)\right)\right)^{\frac{1}{1+\beta\epsilon/h}},	
\end{eqnarray*}
and letting $\bm{\eta} \equiv \bm{\eta}_{k,k+1} := \bm{\xi}_{k-1} \oslash \left(\bm{\Gamma}_{k}^{\top}\bm{\brh}_{k-1}\right)$, we notice that iteration (\ref{ConvThmIter}) can be expressed as a cone preserving composite map $\bm{\theta} := \bm{\theta}_{1} \circ \bm{\theta}_{2} \circ \bm{\theta}_{3}$, where $\bm{\theta} : \mathbb{R}^{N}_{>0} \mapsto \mathbb{R}^{N}_{>0}$, given by 
\begin{eqnarray}
\bm{z}(:,\ell+1) = \bm{\theta}\left(\bm{z}(:,\ell)\right) = \bm{\theta}_{1} \circ \bm{\theta}_{2} \circ \bm{\theta}_{3} \: \left(\bm{z}(:,\ell)\right),
\end{eqnarray}
and $\bm{\theta}_{1}(\bm{z}) := \bm{z}^{\frac{1}{1+\beta\epsilon/h}}$, $\bm{\theta}_{2}(\bm{z}) := \bm{\eta}\odot\bm{z}$, $\bm{\theta}_{3} := \bm{\Gamma}_{k} \bm{z}$. Our strategy is to prove that the composite map $\bm{\theta}$ is contractive on $\mathbb{R}^{N}_{>0}$ w.r.t. the metric $d_{\rm{T}}$.

From (\ref{EDMdef}) and (\ref{Gammaxidef}), $\bm{C}_{k}(i,j)\in[0,\infty)$ which implies $\bm{\Gamma}_{k}(i,j) \in (0,1]$; therefore, $\bm{\Gamma}_{k}$ is a positive linear map for each $k=1,2,\hdots$. Thus, by (linear) Perron-Frobenius theorem, the map $\bm{\theta}_{3}$ is contractive on $\mathbb{R}^{N}_{>0}$ w.r.t. $d_{\rm{T}}$. The map $\bm{\theta}_{2}$ is an isometry by Definition \ref{DefnThompson}. As for the map $\bm{\theta}_{1}$, notice that the quantity $r := 1/(1 + \beta\epsilon/h) \in (0,1)$ since $\beta\epsilon/h > 0$. Therefore, the map $\bm{\theta}_{1}(\bm{z}) := \bm{z}^{r}$ (element-wise exponentiation) is monotone (order preserving) and homogeneous of degree $r\in(0,1)$ on $\mathbb{R}^{N}_{>0}$. By Proposition \ref{PropOrderPreservingMap}, the map $\bm{\theta}_{1}(\bm{z})$ is strictly contractive. Thus, the composition
\[\bm{\theta} = \underbrace{\bm{\theta}_{1}}_{\text{strictly contractive}} \circ \underbrace{\bm{\theta}_{2}}_{\text{isometry}}  \circ \underbrace{\bm{\theta}_{3}}_{\text{contractive}}\]
is strictly contractive w.r.t. $d_{\rm{T}}$, and (by Banach contraction mapping theorem) admits unique fixed point $\bm{z}^{\rm{opt}}$ in $\mathbb{R}^{N}_{>0}$. 
\end{proof}
\begin{corollary}\label{CorrConv}
The Algorithm \ref{ProxRecur} converges to unique fixed point $(\bm{y}^{\rm{opt}},\bm{z}^{\rm{opt}}) \in \mathbb{R}^{N}_{>0} \times \mathbb{R}^{N}_{>0}$.	
\end{corollary}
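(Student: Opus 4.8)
The plan is to lift the contraction already established in Theorem \ref{ThmConvergence} for the $\bm{z}$-subiteration to the full alternating scheme of Algorithm \ref{ProxRecur}. The key observation is that the composite map $\bm{\theta}$ appearing in Theorem \ref{ThmConvergence} was obtained precisely by substituting the $\bm{y}$-update line of Algorithm \ref{ProxRecur} into the $\bm{z}$-update line; hence a fixed point $\bm{z}^{\rm{opt}}$ of $\bm{\theta}$, together with the value of $\bm{y}$ it induces, is exactly a solution of the coupled system (\ref{FixedPtRecursion}). By Theorem \ref{ThmConvergence} the $\bm{z}$-iterates generated by Algorithm \ref{ProxRecur} satisfy $\bm{z}(:,\ell)\to\bm{z}^{\rm{opt}}$ as $\ell\to\infty$, with $\bm{z}^{\rm{opt}}\in\mathbb{R}^{N}_{>0}$ unique; this is just the Banach contraction conclusion in the complete metric space $(\mathbb{R}^{N}_{>0},d_{\rm{T}})$.

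First I would slave the $\bm{y}$-variable to $\bm{z}$. Define the update map $T_{\bm{y}}(\bm{z}):=\brh_{k-1}\oslash(\bm{\Gamma}_{k}\bm{z})$ and set $\bm{y}^{\rm{opt}}:=T_{\bm{y}}(\bm{z}^{\rm{opt}})$. Since every entry of $\bm{\Gamma}_{k}$ lies in $(0,1]$ (as noted in the proof of Theorem \ref{ThmConvergence}) and $\bm{z}>0$, the denominator $\bm{\Gamma}_{k}\bm{z}$ is strictly positive, so $T_{\bm{y}}$ is continuous on $\mathbb{R}^{N}_{>0}$. The corresponding line of Algorithm \ref{ProxRecur} reads $\bm{y}(:,\ell+1)=T_{\bm{y}}(\bm{z}(:,\ell+1))$, so passing to the limit and invoking continuity yields $\bm{y}(:,\ell)\to\bm{y}^{\rm{opt}}$. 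Thus the pair $(\bm{y}(:,\ell),\bm{z}(:,\ell))$ converges to $(\bm{y}^{\rm{opt}},\bm{z}^{\rm{opt}})$, and uniqueness of the limit pair follows from uniqueness of $\bm{z}^{\rm{opt}}$ together with the single-valuedness of $T_{\bm{y}}$.

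It remains to place the limit pair in the \emph{open} cone $\mathbb{R}^{N}_{>0}\times\mathbb{R}^{N}_{>0}$. We already have $\bm{z}^{\rm{opt}}>0$; for $\bm{y}^{\rm{opt}}=\brh_{k-1}\oslash(\bm{\Gamma}_{k}\bm{z}^{\rm{opt}})$ to be strictly positive we need $\brh_{k-1}$ to have strictly positive entries. This is where I expect the only genuine subtlety to lie, and I would dispatch it inductively over the physical time index $k$: the initial weights are positive by sampling, and from the representation $\brh_{k}=\bm{z}^{\rm{opt}}\odot(\bm{\Gamma}_{k}^{\top}\bm{y}^{\rm{opt}})$ in (\ref{ProxUpdate}), with $\bm{z}^{\rm{opt}},\bm{y}^{\rm{opt}}>0$ and $\bm{\Gamma}_{k}^{\top}$ entrywise positive, each $\brh_{k}$ is again strictly positive. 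Hence positivity is preserved across the recursion, so $\bm{y}^{\rm{opt}}\in\mathbb{R}^{N}_{>0}$ and the claimed convergence to a unique fixed point in $\mathbb{R}^{N}_{>0}\times\mathbb{R}^{N}_{>0}$ holds. With the substantive work already carried by Theorem \ref{ThmConvergence}, the corollary reduces to a continuity-plus-positivity argument, and I anticipate no step beyond the positivity bookkeeping to require real effort.
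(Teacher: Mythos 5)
Your proposal is correct and follows essentially the same route as the paper: invoke Theorem \ref{ThmConvergence} for the $\bm{z}$-iterates and then transfer convergence to the $\bm{y}$-iterates through the update $\bm{y}(:,\ell+1)=\brh_{k-1}\oslash\left(\bm{\Gamma}_{k}\bm{z}(:,\ell+1)\right)$; your justification via continuity of the map $\bm{z}\mapsto\brh_{k-1}\oslash(\bm{\Gamma}_{k}\bm{z})$ is if anything cleaner than the paper's appeal to contractivity of $\bm{\Gamma}_{k}$. The inductive positivity bookkeeping for $\brh_{k-1}$ via (\ref{ProxUpdate}) is a detail the paper leaves implicit, and your treatment of it is sound.
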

\begin{proof}
Since $\bm{y}(:,\ell+1) = \bm{\brh}_{k-1} \oslash \left( \bm{\Gamma}_{k} \bm{z}(:,\ell+1)\right)$, the $\bm{z}$ iterates converge to unique fixed point $\bm{z}^{\rm{opt}}\in\mathbb{R}^{N}_{>0}$ (by Theorem \ref{ThmConvergence}), and the linear maps $\bm{\Gamma}_{k}$ are contractive (by Perron-Frebenius theory, as before), consequently the $\bm{y}$ iterates also converge to unique fixed point $\bm{y}^{\rm{opt}}\in\mathbb{R}^{N}_{>0}$. Hence the statement.
\end{proof}

\section{Numerical Results} \label{NumericalResultsSectionLabel}
We now illustrate the computational framework proposed in Section \ref{MainResultSectionLabel} via numerical examples. Our examples involve systems already in JKO canonical form (Section \ref{JKOformsectionlabel}), as well as those which can be transformed to such form by non-obvious change of coordinates.

\subsection{Linear Gaussian Systems}
For an It\^{o} SDE of the form 
\begin{eqnarray}
\differential \bx = \bm{A}\bx \: \differential t + \bm{B} \: \differential \bw, \label{linearSDE}
\end{eqnarray}
it is well known that if $\bx_{0} := \bx(t=0) \sim \mathcal{N}(\bm{\mu}_0,\bm{\Sigma}_0)$,  then the transient joint PDFs $\rho(\bx,t) = \mathcal{N}(\bm{\mu}(t),\bm{\Sigma}(t))$ where the vector-matrix pair $\left(\bm{\mu}(t),\bm{\Sigma}(t)\right)$ evolve according to the ODEs 
\begin{subequations}
\begin{align}
\dot{\bm{\mu}}(t) &= \bm{A}\bm{\mu}, \quad \bm{\mu}(0)=\bm{\mu}_0, \label{meanODE}\\
\dot{\bm{\Sigma}}(t) &= \bm{A}\bm{\Sigma(t)} + \bm{A} \bm{\Sigma(t)}^{\top} + \bm{B} \bm{B}^{\top}, \quad \bm{\Sigma}(0) = \bm{\Sigma}_0.\label{covODE}	
\end{align}
\label{MeanCovODE}	
\end{subequations}
We benchmark the numerical results produced by the proposed proximal algorithm vis-\`{a}-vis the solutions of (\ref{MeanCovODE}). We consider the following two sub-cases of (\ref{linearSDE}).

\subsubsection{Ornstein-Uhlenbeck Process}
We consider the univariate system 
\begin{eqnarray}
\differential x = -ax \: \differential t + \sqrt{2\beta^{-1}} \differential w, \quad a,\beta > 0,	
\label{OU}
\end{eqnarray}
which is in JKO form (\ref{ItoGradient}) with $\psi(x) = \frac{1}{2}ax^{2}$. We generate $N=400$ samples from the initial PDF $\rho_{0} = \mathcal{N}(\mu_{0},\sigma_{0}^{2})$ with $\mu_{0}=5$ and $\sigma_{0}^{2}=4\times 10^{-2}$, and apply the proposed proximal recursion for (\ref{OU}) with time step $h=10^{-3}$, and with parameters $a=1$, $\beta=1$, $\epsilon = 5\times 10^{-2}$. For implementing Algorithm \ref{ProxRecur}, we set the tolerance $\delta = 10^{-3}$, and the maximum number of iterations $L=100$. Fig. \ref{1dOU} shows that the PDF point clouds generated by the proximal recursion match with the analytical PDFs $\mathcal{N}\left(\mu_{0}\exp(-at),(\sigma_{0}^{2}-\frac{1}{a\beta})\exp(-2at) + \frac{1}{a\beta}\right)$, and the mean-variance trajectories (computed from the numerical integration of the weighted scattered point cloud data) match with the corresponding analytical solutions.

\subsubsection{Multivariate LTI}\label{LinGaussLTIsubsubsec}
We next consider the multivariate case (\ref{linearSDE}) where the pair $(\bm{A},\bm{B})$ is assumed to be controllable, and the matrix $\bm{A}$ is Hurwitz (not necessarily symmetric). Under these assumptions, the stationary PDF is $\mathcal{N}(\bm{0},\bm{\Sigma}_{\infty})$ where $\bm{\Sigma}_{\infty}$ is the unique stationary solution of (\ref{covODE}) that is guaranteed to be symmetric positive definite. However, it is not apparent whether (\ref{linearSDE})  can be expressed in the form (\ref{ItoGradient}), since for non-symmetric $\bm{A}$, there does \emph{not} exist constant symmetric positive definite matrix $\bm{\Psi}$ such that $\bm{Ax} = -\nabla\bx^{\top} \bm{\Psi} \bx$, i.e., the drift vector field does not admit a natural potential. Thus, implementing the JKO scheme for (\ref{linearSDE}) is non-trivial in general.

In our recent work \cite{halder2017gradient}, two successive \emph{time-varying} co-ordinate transformations were proposed which can bring (\ref{linearSDE}) in the form (\ref{ItoGradient}), thus making it amenable to the JKO scheme. We apply these change-of-coordinates to (\ref{linearSDE}) with 
\[\bm{A} = \begin{pmatrix}
 -10 & 5\\
 -30 & 0	
 \end{pmatrix}, \quad \bm{B} = \begin{pmatrix}
 2\\2.5	
 \end{pmatrix},
\]
which satisfy the stated assumptions for the pair $(\bm{A},\bm{B})$, and implement the proposed proximal recursion on this transformed co-ordinates with $N=400$ samples generated from the initial PDF $\rho_{0} = \mathcal{N}(\bm{\mu}_{0},\bm{\Sigma}_{0})$, where $\bm{\mu}_{0}=(4,4)^{\top}$ and $\bm{\Sigma}_{0}=4\bm{I}_{2}$. As before, we set $\delta = 10^{-3}, L=100, h=10^{-3}, \beta=1, \epsilon = 5\times 10^{-2}$. Once the proximal updates are done, we transform back the probability weighted scattered point cloud to the original state space co-ordinates via change-of-measure formula associated with the known co-ordinate transforms \cite[Section III.B]{halder2017gradient}. Fig. \ref{Hurwitz} shows the resulting point clouds superimposed with the contour plots for the analytical solutions $\mathcal{N}(\bm{\mu}(t),\bm{\Sigma}(t))$ given by (\ref{MeanCovODE}). Figs. \ref{Hurwitzmean} and \ref{Hurwitzcovar} compare the respective mean and covariance evolution. We point out that the change of co-ordinates in \cite{halder2017gradient} requires implementing the JKO scheme in a time-varying rotating frame (defined via exponential of certain time varying skew-symmetric matrix) that depends on the stationary covariance $\bm{\Sigma}_{\infty}$. As a consequence, the stationary covariance resulting from the proximal recursion oscillates about the true stationary value.

\begin{figure}[h]
\centering
\includegraphics[width=\linewidth]{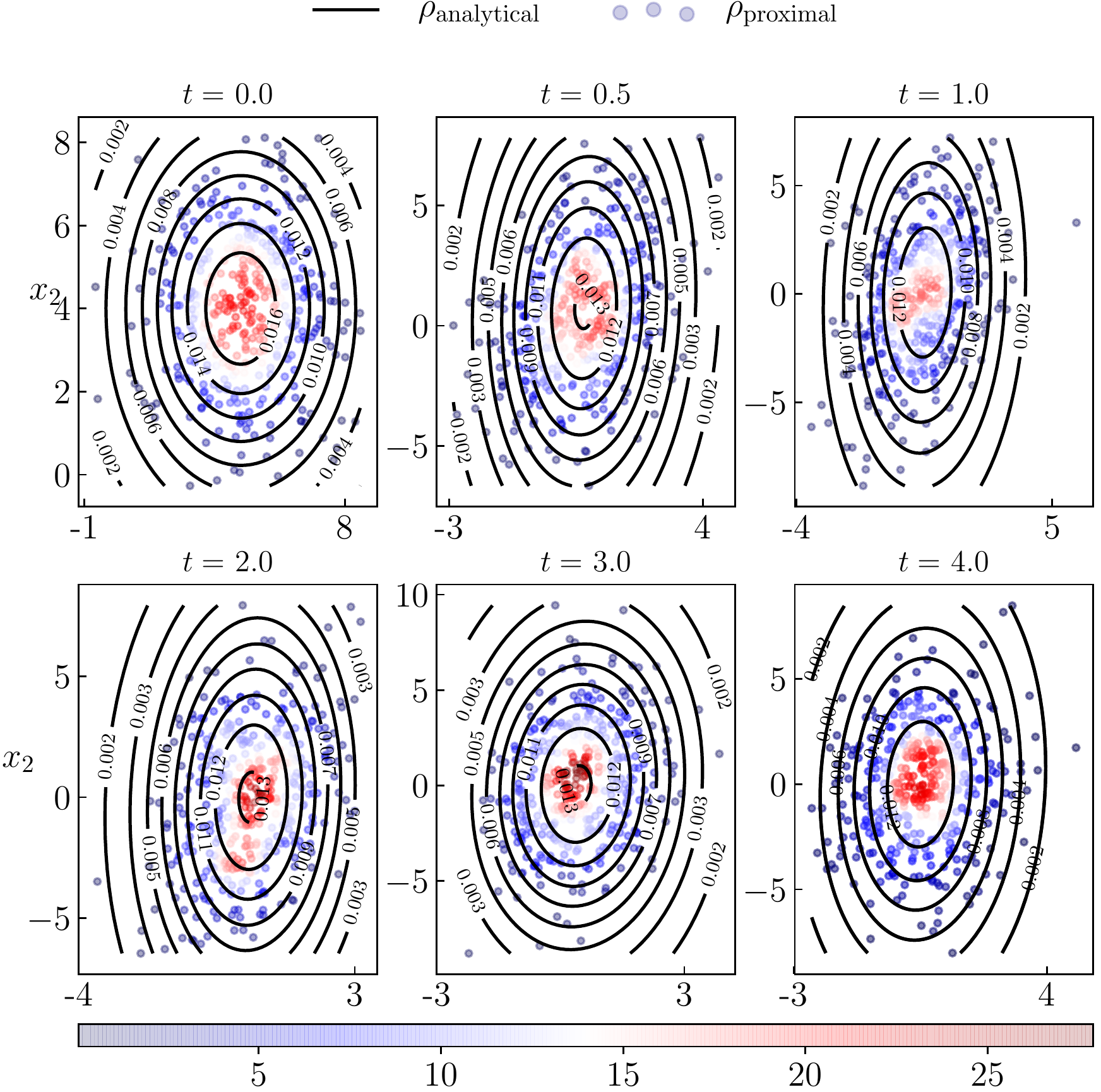}
\vspace*{-0.2in}
\caption{\small{Comparison of the analytical (\emph{contour plots}) and proximal (\emph{weighted scattered point cloud}) joint PDFs of the FPK PDE for (\ref{linearSDE}) with time step $h=10^{-3}$, and with parameters $\beta=1, \epsilon = 5\times 10^{-2}$. Simulation details are given in Section \ref{LinGaussLTIsubsubsec}. The color (\emph{red = high, blue = low}) denotes the joint PDF value obtained via proximal recursion at a point at that time (see colorbar).}}
\vspace*{-0.1in}
\label{Hurwitz}
\end{figure}

\begin{figure}[h]
\centering
\includegraphics[width=.85\linewidth]{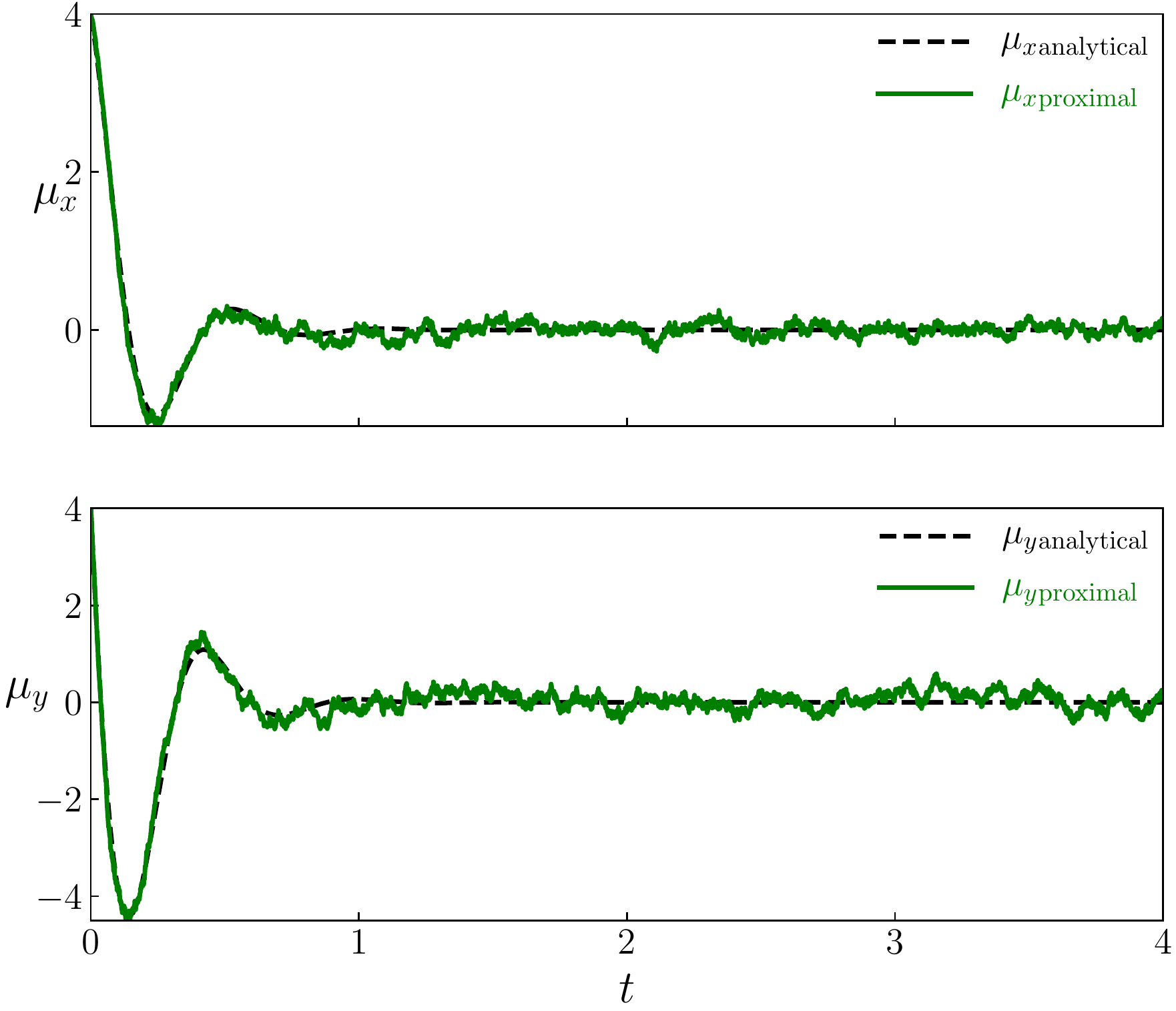}
\vspace*{-0.1in}
\caption{\small{Comparison of the components of the mean vectors from analytical (\emph{dashed}) and proximal (\emph{solid}) computation of the joint PDFs for (\ref{linearSDE}) with time step $h=10^{-3}$, and with parameters $\beta=1, \epsilon = 5\times 10^{-2}$. Simulation details are given in Section \ref{LinGaussLTIsubsubsec}.}}
\vspace*{-0.1in}
\label{Hurwitzmean}
\end{figure}

\begin{figure}[h] 
\centering
\includegraphics[width=.85\linewidth]{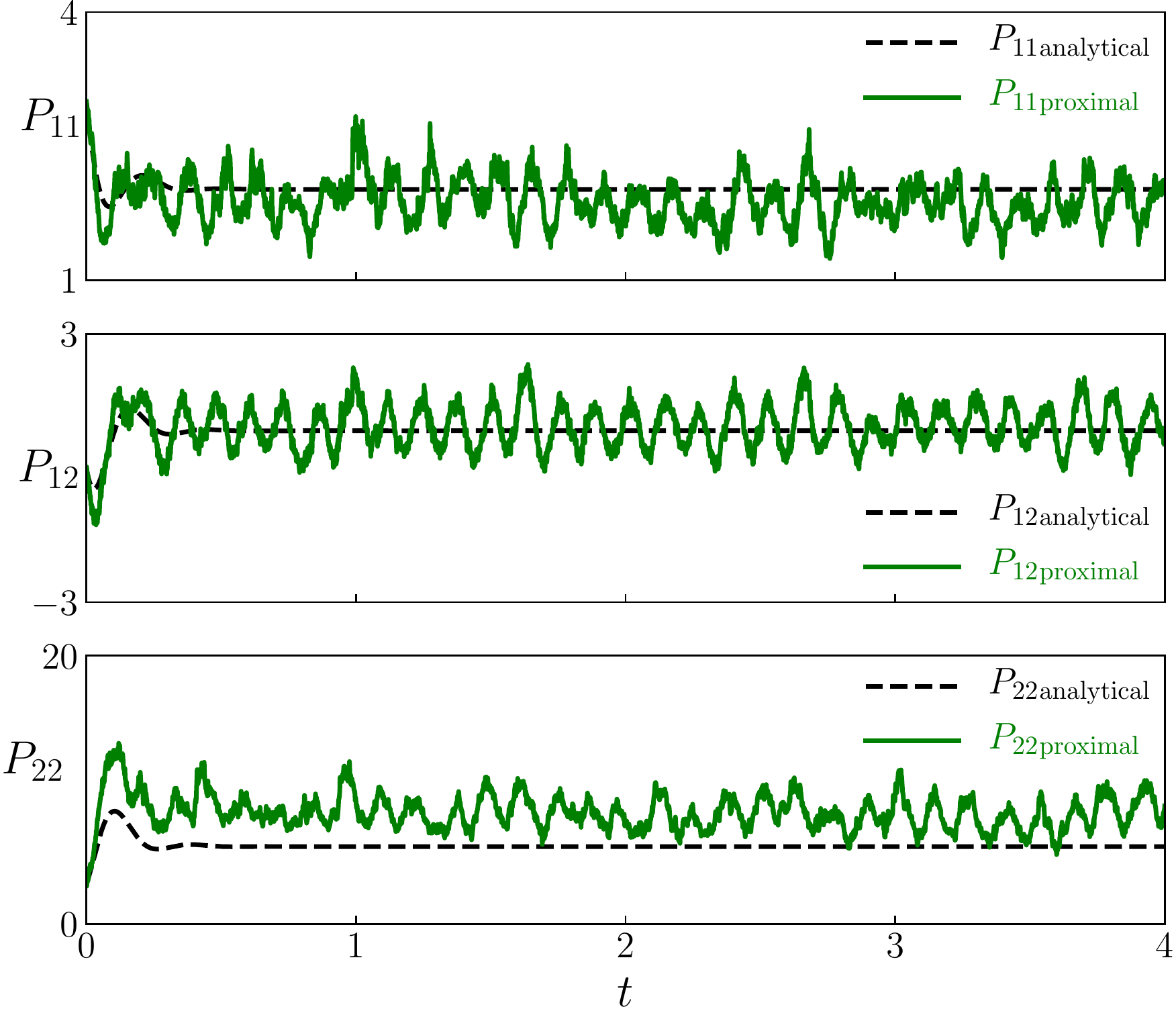}
\vspace*{-0.1in}
\caption{\small{Comparison of the components of the covariance matrices from analytical (\emph{dashed}) and proximal (\emph{solid}) computation of the joint PDFs for (\ref{linearSDE}) with time step $h=10^{-3}$, and with parameters $\beta=1, \epsilon = 5\times 10^{-2}$. Simulation details are given in Section \ref{LinGaussLTIsubsubsec}.}}
\vspace*{-0.1in}
\label{Hurwitzcovar}
\end{figure}

\begin{figure}[tph]
\centering
\includegraphics[width=0.82\linewidth]{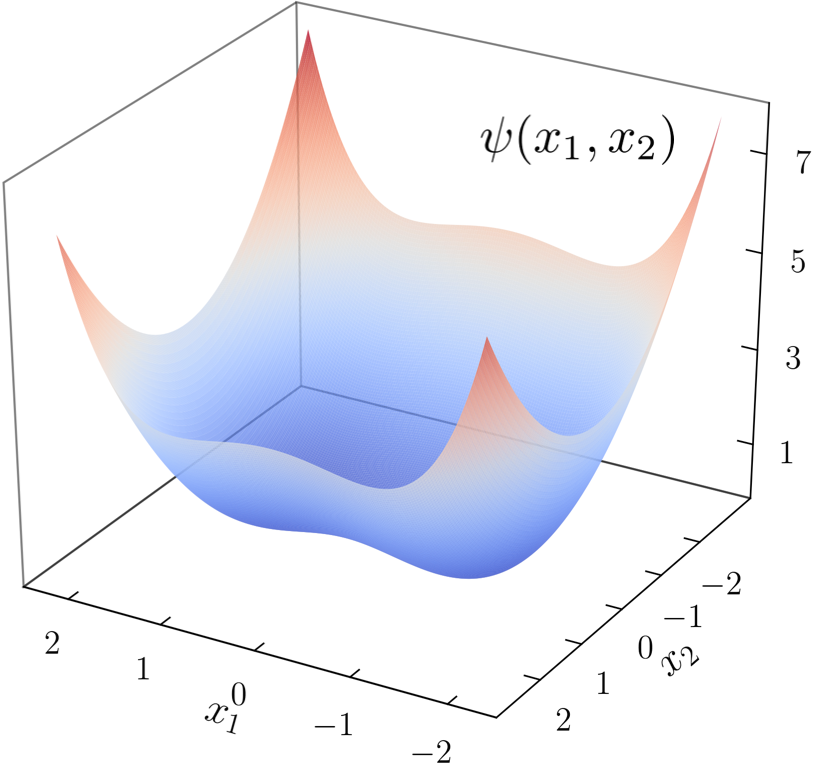}
\vspace*{-0.1in}
\caption{\small{The drift potential $\psi(x_{1},x_{2}) = \dfrac{1}{4}(1+x_1^4) + \dfrac{1}{2}(x_2^2-x_1^2)$ used in the numerical example given in Section \ref{SubsecNonlinNongauss}.}}
\vspace*{-0.1in}
\label{2dpotential}
\end{figure}

\begin{figure}[tph]
\centering
\includegraphics[width=0.95\linewidth]{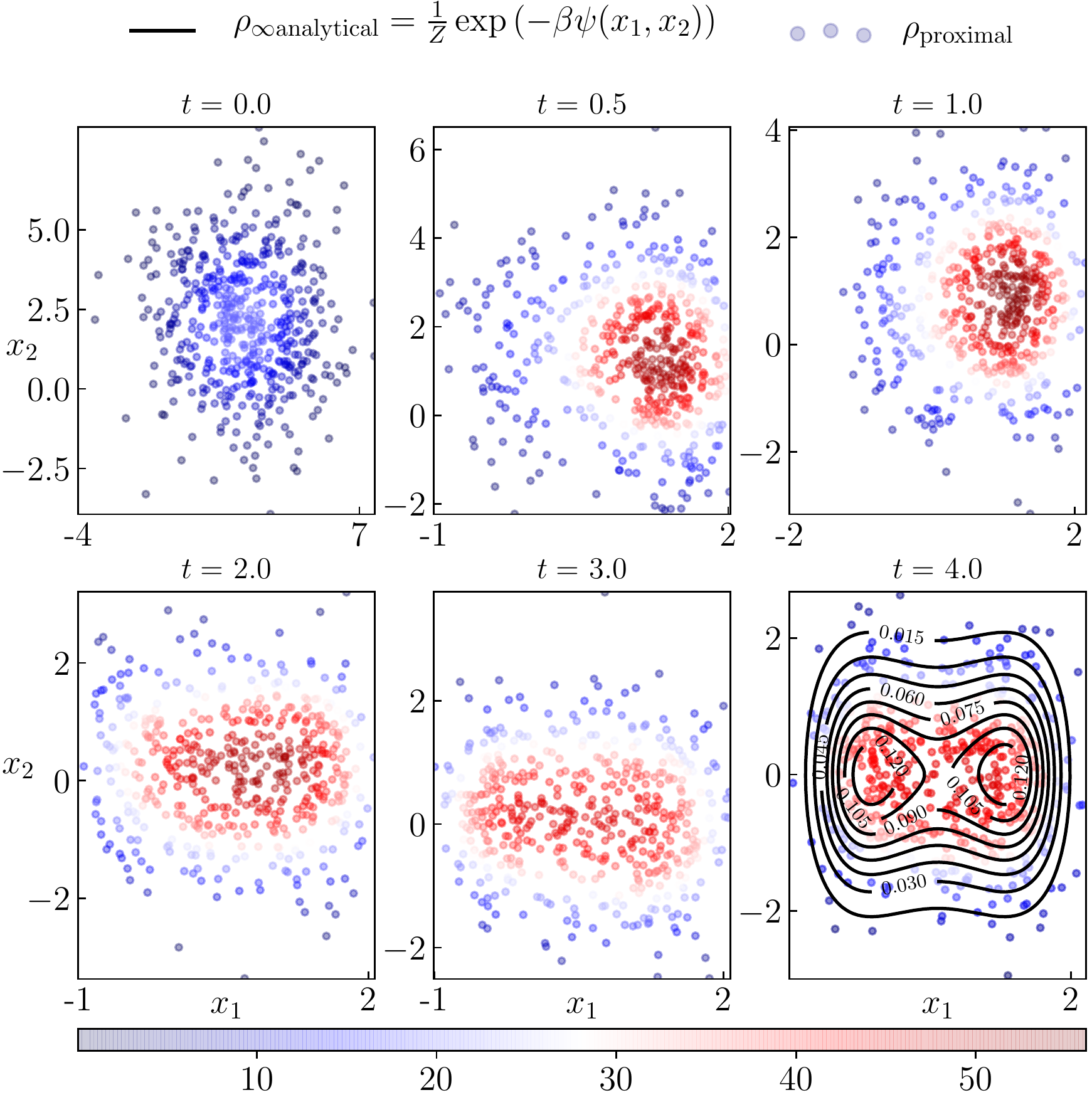}
\vspace*{-0.1in}
\caption{\small{The proximal (\emph{weighted scattered point cloud}) joint PDFs of the FPK PDE (\ref{FPKgradient}) with the drift potential shown in Fig. \ref{2dpotential}, time step $h=10^{-3}$, and with parameters $\beta=1, \epsilon = 5\times 10^{-2}$. Simulation details are given in Section \ref{SubsecNonlinNongauss}. The color (\emph{red = high, blue = low}) denotes the joint PDF value obtained via proximal recursion at a point at that time (see colorbar). In the bottom right plot, the contour lines correspond to the analytical solution for the stationary PDF $\rho_{\infty}$.}}
\vspace*{-0.05in}
\label{2dgrad}
\end{figure}

\subsection{Nonlinear Non-Gaussian System}\label{SubsecNonlinNongauss}
Next we consider a planar nonlinear system of the form (\ref{ItoGradient}) with $\psi(x_{1},x_{2}) = \dfrac{1}{4}(1+x_1^4) + \dfrac{1}{2}(x_2^2-x_1^2)$ (see Fig. \ref{2dpotential}). As mentioned in Section \ref{JKOformsectionlabel}, the stationary PDF is $\rho_{\infty}(\bx) = \kappa \exp\left(-\beta\psi(\bx)\right)$, which for our choice of $\psi$, is bimodal. In this case, the transient PDFs have no known analytical solution but can be computed using the proposed proximal recursion. For doing so, we generate $N=400$ samples from the initial PDF $\rho_{0} = \mathcal{N}(\bm{\mu}_{0},\bm{\Sigma}_{0})$ with $\bm{\mu}_{0}=(2,2)^{\top}$ and $\bm{\Sigma}_{0}=4\bm{I}_{2}$, and set $\delta = 10^{-3}, L=100, h=10^{-3}, \beta=1, \epsilon = 5\times 10^{-2}$, as before. The resulting weighted point clouds are shown in Fig. \ref{2dgrad}; it can be seen that as time progresses, the joint PDFs computed via the proximal recursion, tend to the known stationary solution $\rho_{\infty}$ (contour plots in the right bottom sub-figure in Fig. \ref{2dgrad}).

\begin{figure}[tph]
\centering
\includegraphics[width=0.9\linewidth]{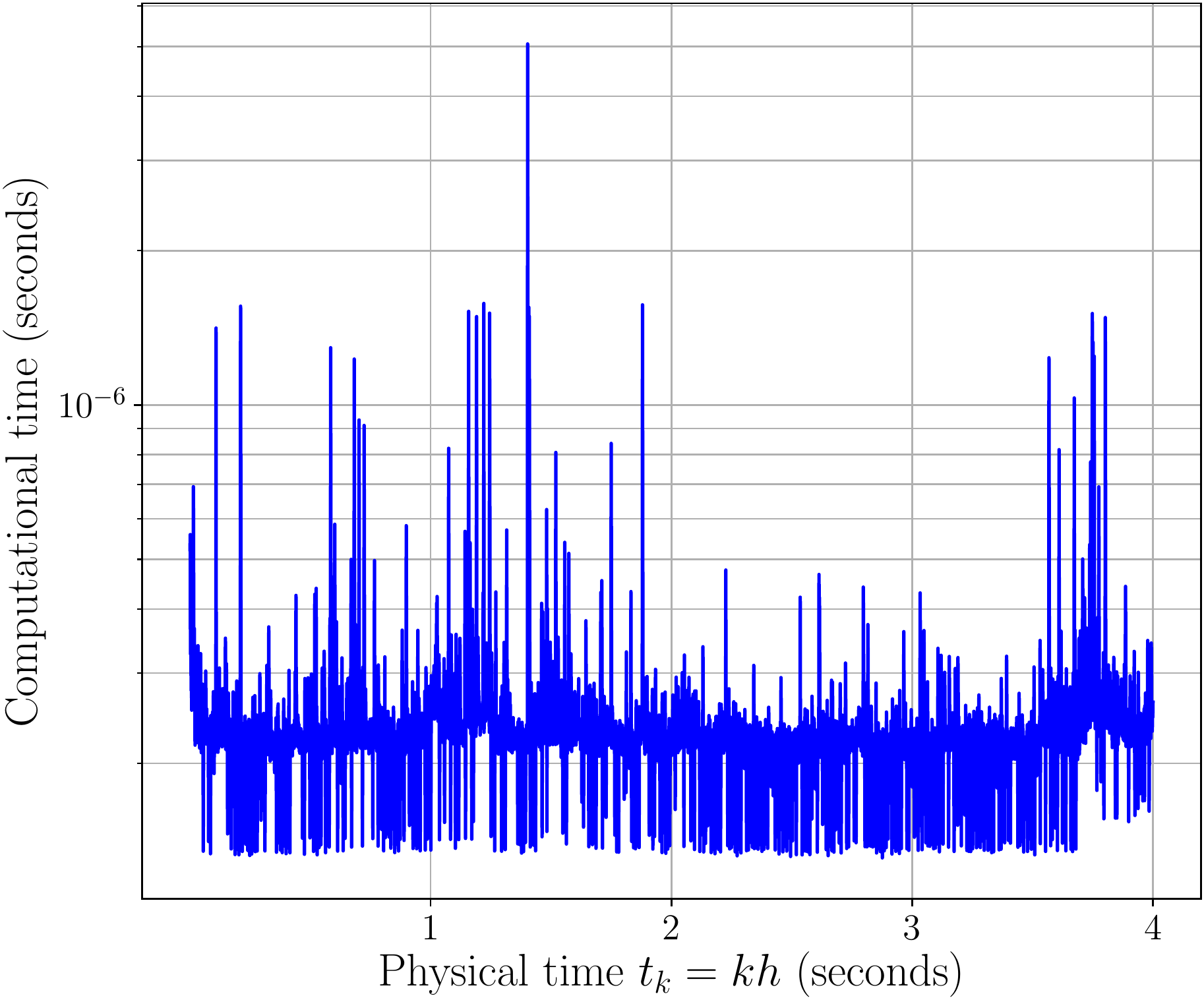}
\caption{\small{The computational times for proximal updates for the simulation in Section \ref{SubsecNonlinNongauss}. Here, the physical time-step $h= 10^{-3}$ s, and $k\in\mathbb{N}$.}}
\vspace*{-0.15in}
\label{RateOfConvProx}
\end{figure}

Fig. \ref{RateOfConvProx} shows the computational times for the proposed proximal recursions applied to the above nonlinear non-Gaussian system. Since the proposed algorithm involves sub-iterations (``while loop" in Algorithm \ref{ProxRecur} over index $\ell\leq L$) while keeping the physical time ``frozen", the convergence reported in Section \ref{SubsecConvergence} must be achieved at ``sub-physical time step" level, i.e., must incur smaller than $h$ (here, $h= 10^{-3}$ s) computational time. Indeed, Fig. \ref{RateOfConvProx} shows that each proximal update takes approx. $10^{-6}$ s, or $10^{-3} h$ computational time, which demonstrates the efficacy of the proposed framework. 

\begin{figure*}[t]
\centering
\includegraphics[width=.98\linewidth]{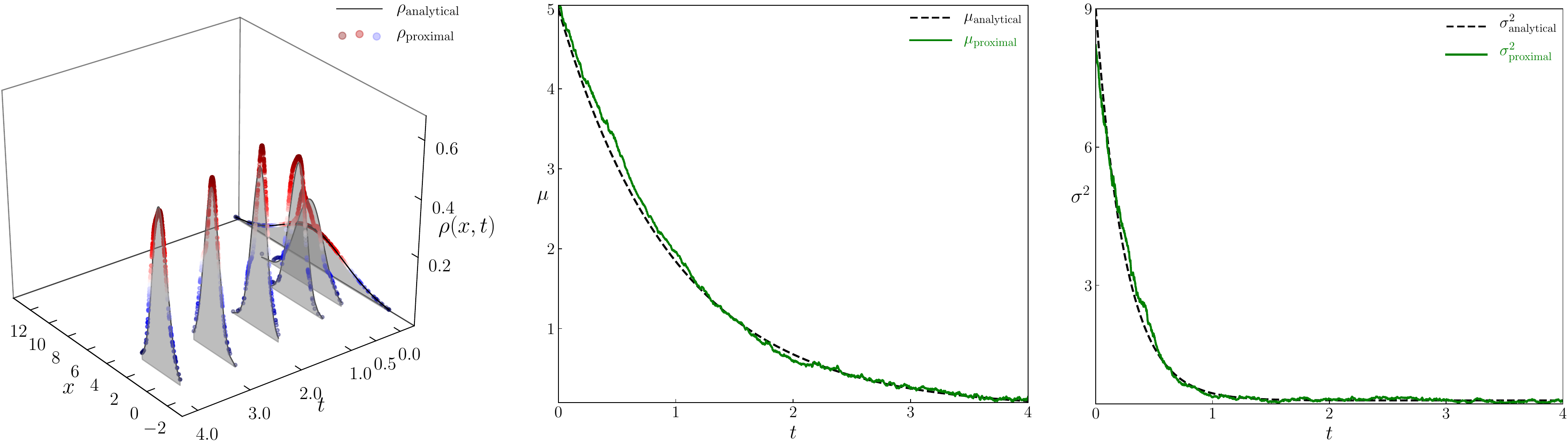}
\caption{\small{Comparison of the analytical and proximal solutions of the McKean-Vlasov flow for (\ref{MeanRevertingSDE}) with time step $h=10^{-3}$, $\rho_{0}=\mathcal{N}(5,9)$, and with parameters $a=b=1$, $\beta=1$, $\epsilon = 5\times 10^{-2}$. Shown above are the time evolution of the transient (\emph{left}) PDFs, (\emph{middle}) means, and (\emph{right}) variances.}}
\vspace*{-0.1in}
\label{MVFPfig}
\end{figure*}

\subsection{Non-local Interactions}\label{SubsecMV}
We now consider a numerical example demonstrating our gradient flow framework for computing the transient PDFs generated by the McKean-Vlasov integro-PDE (\ref{MVFPKgradient}) associated with the density-dependent sample path dynamics (\ref{MVsde}). If both the potentials $\psi$ and $\phi$ are convex functions, then (\ref{MVFPKgradient}) admits unique stationary density $\rho_{\infty}(\bx)$ (see e.g., \cite{carrillo2003kinetic},\cite[Section 5.2]{malrieu2011uniform}). To keep the exposition simple, we consider the univariate case $\psi(x) = \frac{1}{2}ax^{2}$, $\phi(x) = \frac{1}{2}bx^{2}$, $a,b>0$. Performing the integration appearing in the convolution allows us to rewrite (\ref{MVsde}) as the mean-reverting process:
\begin{eqnarray}
\differential x = - \left((a+b)x - b\mu(t)\right)\:\differential t \:+\: \sqrt{2\beta^{-1}}\:\differential w, \quad x(0) = x_{0},
\label{MeanRevertingSDE}	
\end{eqnarray}
where $\mu(t)$ is the mean of the transient PDF $\rho(x,t)$. Assuming $x_{0} \sim \mathcal{N}(\mu_{0}, \sigma_{0}^{2})$, and applying expectation operator to both sides of (\ref{MeanRevertingSDE}) yields $\mu(t) = \mu_{0}\exp(-at)$. Consequently, the transient PDF for (\ref{MeanRevertingSDE}) at time $t$ is $\rho(x,t)=\mathcal{N}\left(\mu(t), \sigma^{2}(t)\right)$, with 
\begin{subequations}
\begin{align}
\mu(t) &= \mu_{0}\exp(-at), \label{meanMV}\\
\sigma^{2}(t) &= \left(\sigma_{0}^{2}-\frac{1}{(a+b)\beta}\right)\exp\left(-2(a+b)t\right) + \frac{1}{(a+b)\beta}.\label{covMV}	
\end{align}
\label{MeanCovMV}	
\end{subequations}
Clearly, the stationary PDF is $\rho_{\infty}(x) = \mathcal{N}\left(0,1/(a+b)\beta\right)$.

To benchmark our algorithm with the analytical solution (\ref{MeanCovMV}), we implement the proximal recursion for (\ref{MeanRevertingSDE}) with free energy (\ref{MVFreeEnergy}). Following \cite[Section 4]{benamou2016augmented}, we replace the non-convex bilinear term $\int_{\mathbb{R}^{n}\times\mathbb{R}^{n}}\phi(\bx - \bm{y})\varrho(\bx)\varrho(\bm{y})\differential\bx\differential\bm{y}$ in (\ref{JKOscheme:b}), with the linear term $\int_{\mathbb{R}^{n}}\phi(\bx - \bm{y})\varrho(\bx)\varrho_{k-1}(\bm{y})\differential\bx\differential\bm{y}$, $k\in\mathbb{N}$,  resulting in a semi-implicit variant of (\ref{JKOscheme}), given by
\begin{eqnarray}
\varrho_{k} =  \underset{\varrho \in \mathscr{D}_2 }\arginf \ \frac{1}{2} W^2(\varrho_{k-1},\varrho) + h\: F(\varrho_{k-1},\varrho), \quad k\in\mathbb{N},
\label{SemiImplicitProxRecursion}	
\end{eqnarray}
with $\varrho_{0} \equiv \rho_{0}(\bx)$ (the initial PDF). The proof for the fact that such a semi-implicit scheme guarantees $\varrho_{k}(\bx) \rightarrow \rho(\bm{x},t=kh)$ for $h\downarrow 0$, where $\rho(\bx,t)$ is the flow generated by (\ref{MVFPKgradient}), can be found in \cite[Section 12.3]{laborde201712}. Notice that for the FPK gradient flow, the ``discrete free energy" in (\ref{FiniteSampleJKO}) was $F(\brh) = \langle \bm{\psi}_{k-1}+\beta^{-1}\log\brh,\brh \rangle$. The scheme (\ref{SemiImplicitProxRecursion}) allows us to write a similar expression in the McKean-Vlasov gradient flow case, as $F(\brh) = \langle \bm{\psi}_{k-1}+\bm{D}_{k-1}\brh_{k-1}+\beta^{-1}\log\brh,\brh \rangle$, where the symmetric matrix $\bm{D}_{k-1}$ is given by 
\[\bm{D}_{k-1}(i,j) = \phi\left(\bx_{k-1}^{i} - \bx_{k-1}^{j}\right), \quad i,j=1,\hdots,N, \quad k \in \mathbb{N}.\]
For the particular choice $\phi(x)=\frac{1}{2}bx^{2}$, notice that $\bm{D}_{k-1}$ is a (scaled) Euclidean distance matrix on the Euler-Maruyama update. In this case, the associated Euler-Maruyama scheme (\ref{EulerMaruyamaMV}) is
\begin{eqnarray}
\bx_{k}^{i} &=& \bx_{k-1}^{i}-h\left(a\bx_{k-1}^{i} + b\left(\bx_{k-1}^{i} - \bm{1}^{\top}\brh_{k-1}\right)\right)\nonumber\\
&+& \sqrt{2\beta^{-1}}\left(\bm{w}_{k}^{i} - \bm{w}_{k-1}^{i}\right), \quad k\in\mathbb{N}, \, i=1,\hdots,N,
\label{EulerMaruyamaMVexample}	
\end{eqnarray}
i.e., the dashed arrow in Fig. \ref{BlockDiagm} becomes active. Fig. \ref{MVFPfig} shows that the weighted scattered point cloud solutions for (\ref{MeanRevertingSDE}) with $a=b=1$, computed through our proximal algorithm match with the analytical solutions $\mathcal{N}(\mu(t),\sigma^{2}(t))$ given by (\ref{MeanCovMV}). For Fig. \ref{MVFPfig}, the parameter values used in our simulation are $h=10^{-3}$, $\beta=1$, $\epsilon = 5\times 10^{-2}$, $\mu_{0}=5$, $\sigma_{0}^{2}=9$, $N=400$, $\delta = 10^{-3}$, $L=100$.

\section{Extensions} \label{ExtensionsSectionLabel}
In Section \ref{LinGaussLTIsubsubsec}, we have already seen that systems not in JKO canonical form may be transformed to the same via suitable change-of-coordinates, thus making density propagation for such systems amenable via our framework. In this Section, we provide two extensions along these lines. First, we consider a case of state-dependent diffusion; thereafter, we consider a system with mixed conservative-dissipative drift. In both cases, we use specific examples (instead of general remarks) to help illustrate the extensions of the basic framework. These examples point out the broad scope of the algorithms proposed herein.

\subsection{Multiplicative Noise}
We consider the It\^{o} SDE for the Cox-Ingersoll-Ross (CIR) model \cite{cox1985theory}, given by 
\begin{eqnarray}
\differential x = a(\theta - x)\:\differential t \: + \: b\sqrt{x}\:\differential w, \quad 2a > b^{2}>0, \quad \theta>0.
\label{CIRsde}
\end{eqnarray}
Due to multiplicative noise, (\ref{CIRsde}) is not in JKO canonical form (\ref{ItoGradient}). If the initial PDF $\rho_{0}$ is Dirac delta at $x_{0}$, then the FPK PDE for (\ref{CIRsde}) admits closed form solution:
\begin{eqnarray}
\rho(x,t) = \begin{cases}c \exp(-(u+v)) \left(\displaystyle\frac{v}{u}\right)^{q/2} I_{q}\left(2\sqrt{uv}\right), &x > 0,\\
0, &\text{otherwise}, \end{cases}
\label{TransientAnalyticalPDF}
\end{eqnarray}
where $I_{q}(\cdot)$ is the modified Bessel function of order $q$, and 
\begin{subequations}
\begin{align}
q &:= \frac{2a\theta}{b^{2}} - 1, &c&:= \displaystyle\frac{2a}{b^{2}\left(1 - \exp(-at)\right)}, \label{qcdef}\\
u &:=  c x_{0} \exp(-at), &v&:= cx. \label{uvdef}
\end{align} 
\label{paramCIRpdf}
\end{subequations}
\noindent The transient solutions (\ref{TransientAnalyticalPDF}) are non-central chi-squared PDFs. The stationary solution is a Gamma PDF $\rho_{\infty}(x) \propto x^{q} \exp\left(-2ax/b^{2}\right)$, $x>0$. We will benchmark our proximal algorithm against (\ref{TransientAnalyticalPDF}).

\begin{figure}[t]
\centering
\includegraphics[width=.9\linewidth]{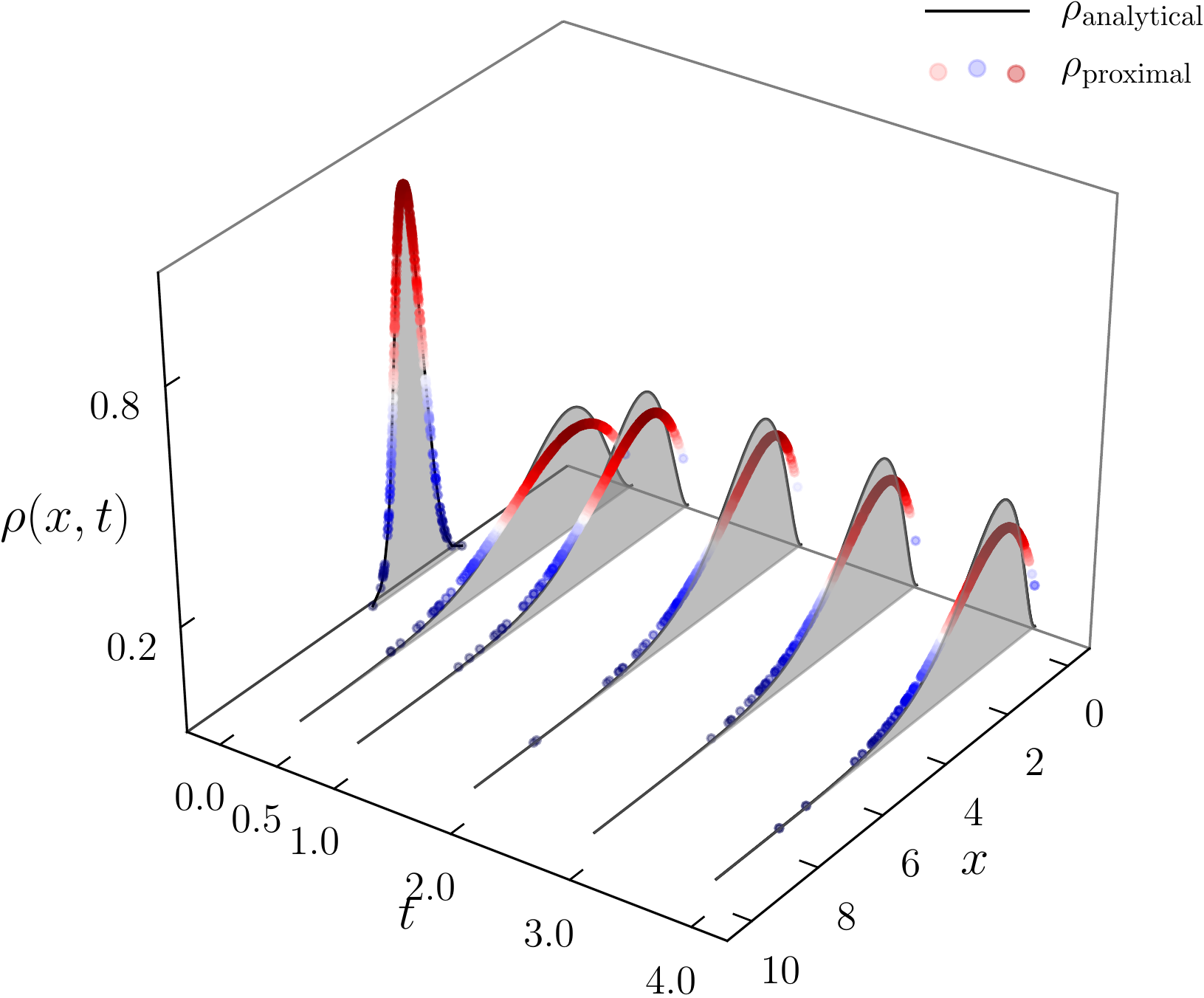}
\caption{\small{Comparison of the analytical and proximal transient PDFs of the FPK PDE for (\ref{CIRsde}) with time step $h=10^{-3}$, and with parameters $a=3$, $b=2$, $\theta = 2$, $x_{0}=5$, $\epsilon = 5\times 10^{-2}$. To approximate the analytical PDFs resulting from $\rho_{0}(x) = \delta(x-5)$, the proximal recursions were performed with the initial PDF $\mathcal{N}(5,10^{-4})$.}}
\vspace*{-0.1in}
\label{CIRfig}
\end{figure}

In order to transcribe (\ref{CIRsde}) in the JKO canonical form, we employ the Lamperti transform \cite{moller2010state,luschgy2006functional}, where the idea is to find a change of variable $y = \varsigma(x)$ such that the SDE for new variable $y$ has unity diffusion coefficient. From It\^{o}'s lemma, it follows that the requisite transformation for (\ref{CIRsde}) is $y = \varsigma(x) := \frac{2}{b}\sqrt{x}$, and the resulting SDE in $y$ becomes  
\begin{eqnarray}
\differential y = \bigg\{ \left(\frac{2a\theta}{b^{2}} - \frac{1}{2}\right)\frac{1}{y} - \frac{a}{2}y\bigg\}\:\differential t \: + \: \differential w.
\label{transformedSDE}
\end{eqnarray}
Clearly, (\ref{transformedSDE}) is in the JKO canonical form (\ref{ItoGradient}) with $\beta=2$, and
\begin{eqnarray}
\psi(y) = ay^{2}/4 - \left(q+1/2\right)\log y.
\end{eqnarray}
So the proposed proximal algorithm (Algorithm \ref{ProxRecur}) can be applied to (\ref{transformedSDE}), and at each time $t$, the resulting PDF $\rho_{Y}(y,t)$ can be transformed back to the PDF $\rho_{X}(x,t)$ via the change-of-measure formula for the push-forward map $x = \varsigma^{-1}(y) = by^{2}/4$. Fig. \ref{CIRfig} shows the comparison of the analytical and proximal transient PDFs for (\ref{CIRsde}) resulting from $\rho_{0} = \delta(x-5)$ with $a=3$, $b=2$, $\theta = 2$, $h=10^{-3}$, $\epsilon = 5\times 10^{-2}$, $N=400$, $\delta=10^{-3}$, and $L=100$.

\subsection{Mixed Conservative-Dissipative Drift}\label{subsecmixed}
In many engineering applications, one encounters It\^{o} SDEs where the drift vector fields have both dissipative (gradient) and conservative (Hamiltonian) components. For example, stochastic systems arising from Newton's law in mechanics often have mixed conservative-dissipative structure. Our intent here is to illustrate that such SDEs are amenable to the proposed proximal recursion framework. As an example, we will work out the details for a perturbed two-body problem in celestial mechanics similar to the one treated in \cite{sun2016uncertainty}.

We consider the relative motion of a satellite in geocentric orbit, given by the second order Langevin equation
\begin{align}
\ddot{\bm{q}} = -\displaystyle\frac{\mu\bm{q}}{\parallel\bm{q}\parallel_{2}^{3}} \:-\:\gamma \dot{\bm{q}} \:&+\:\bm{f}_{\text{pert}}(\bm{q}) \nonumber\\
&+\:\sqrt{2\beta^{-1}\gamma}\:\times\:\text{stochastic forcing}, 
\label{LangevinCelestial} 	
\end{align}
where $\bm{q}:=(x,y,z)^{\top}\in\mathbb{R}^{3}$ is the relative position vector for the satellite, $\mu$ is a constant (product of the Gavitational constant and the mass of Earth), $-\gamma\dot{\bm{q}}$ models linear drag\footnote[2]{More generally, for nonlinear drag of the form $-\gamma\nabla_{\dot{\bm{q}}}\widetilde{V}(\dot{\bm{q}})$, the term $-\gamma\bm{p}$ in (\ref{SDEsuccint}) will become $-\gamma\nabla\widetilde{V}(\bm{p})$. This will entail modifying the functional $\widehat{F}$ in (\ref{JKOschemeKramers}) as $ \mathbb{E}_{\rho}\left[\widetilde{V}(\bm{p}) \: + \: \beta^{-1}\log\rho\right]$. The linear drag illustrated here corresponds to the special case $\widetilde{V}(\bm{p})\equiv \parallel\bm{p}\parallel_{2}^{2}/2$.}, $\bm{f}_{\text{pert}}(\bm{q})$ models (deterministic) perturbative force due to the oblateness of Earth, and the stochastic forcing is due to solar radiation pressure, free-molecular aerodynamic forcing etc. Using the shorthands for sines and cosines as $c\tau := \cos\tau$, $s\tau:=\sin\tau$, and recalling the relations among spherical coordinates $(r,\theta,\phi)$ and cartesian coordinates $(x,y,z)$, given by $r = \sqrt{x^{2} + y^{2} + z^{2}}$, $c\phi = x/r$, $s\phi = \sqrt{1 - (c\phi)^{2}}$, $c\theta = z/r$, $s\theta = \sqrt{1 - (c\theta)^{2}}$, we can write $\bm{f}_{\text{pert}}(\bm{q})$ in spherical coordinates as \cite[eqn. (12)-(13)]{sun2016uncertainty}
\begin{eqnarray}\begin{pmatrix} f_{r}\\
f_{\theta}\\
f_{\phi}
\end{pmatrix}_{\text{pert}} \!\!= \begin{pmatrix} \displaystyle\frac{k}{2r^{4}}\left(3 (s\theta)^{2} - 1\right)\\
-\displaystyle\frac{k}{r^{5}} s\theta\:c\theta\\
0
\end{pmatrix}, \: k := 3J_{2}R_{\rm{E}}^{2}\mu = \:\text{constant},
\label{J2Remu}
\end{eqnarray}
and the same in cartesian coordinates as
\begin{eqnarray}
\bm{f}_{\text{pert}}(\bm{q}) = \begin{pmatrix} f_{x}\\
f_{y}\\
f_{z}
\end{pmatrix}_{\text{pert}} =  \begin{pmatrix}s\theta\:c\phi & c\theta\:c\phi & -s\phi\\
s\theta\:s\phi & c\theta\:s\phi & c\phi\\
c\theta & -s\theta & 0
\end{pmatrix} \begin{pmatrix} f_{r}\\
f_{\theta}\\
f_{\phi}
\end{pmatrix}_{\text{pert}}.
\label{mapping}
\end{eqnarray}
In (\ref{J2Remu}), the Earth oblateness coefficient $J_{2} = 1.082\times 10^{-3}$, the radius of Earth $R_{\rm{E}} = 6.3781\times 10^{6}$ m, and the Earth standard Gravitational parameter $\mu=3.9859\times 10^{14}$ m$^3$/s$^2$. Modeling the stochastic forcing in (\ref{LangevinCelestial}) as standard Gaussian white noise (as in \cite{sun2016uncertainty}), (\ref{LangevinCelestial}) can then be expressed as an It\^{o} SDE in $\mathbb{R}^{6}$:
\begin{eqnarray}
\begin{pmatrix}
\differential x\\
\differential y\\
\differential z\\
\differential v_x\\
\differential v_y\\
\differential v_z
\end{pmatrix} = \begin{pmatrix}
v_x\\
v_y\\
v_z\\
\\
-\displaystyle\frac{\mu x}{r^{3}} + (f_{x})_{\text{pert}} - \gamma v_{x}\\
\\
-\displaystyle\frac{\mu y}{r^{3}} + (f_{y})_{\text{pert}} - \gamma v_{y}\\
\\
-\displaystyle\frac{\mu z}{r^{3}} + (f_{z})_{\text{pert}} - \gamma v_{z}
\end{pmatrix}\differential t + \sqrt{2\beta^{-1}\gamma}\begin{pmatrix}
 	0\\0\\0\\
 	\differential w_{1}\\
 	\differential w_{2}\\
 	\differential w_{3}
 \end{pmatrix},
\end{eqnarray}
or more succinctly,
\begin{eqnarray}
\begin{pmatrix}
 	\differential\bm{q}\\
 	\differential\bm{p}
 \end{pmatrix}
 = \begin{pmatrix}
 	\bm{p}\\
 	-\nabla V(\bm{q}) - \gamma\bm{p}
 \end{pmatrix}\differential t \: + \: \sqrt{2\beta^{-1}\gamma}\:\begin{pmatrix}
\bm{0}_{3\times 1}\\
\differential\bm{w}_{3\times 1}	
\end{pmatrix},
\label{SDEsuccint}	
\end{eqnarray}
where $\bm{p} := (v_{x}, v_{y}, v_{z})^{\top}\in\mathbb{R}^{3}$ is the velocity vector, and 
\begin{subequations}
\begin{align}
&V(\bm{q}) := V_{\text{gravitational}}(\bm{q}) + V_{\text{pert}}(\bm{q}),\label{totalV}\\
&\nabla_{\bm{q}} V_{\text{gravitational}}(\bm{q}) = \displaystyle\frac{\mu\bm{q}}{\parallel\bm{q}\parallel_{2}^{3}}, \: -\nabla_{\bm{q}}V_{\text{pert}}(\bm{q}) = \begin{pmatrix} f_{x}\\
f_{y}\\
f_{z}
\end{pmatrix}_{\text{pert}}.
\end{align}
\label{Vdefn}
\end{subequations} 

Introducing a ``Hamiltonian-like function" $H(\bm{q},\bm{p}) :=  \parallel \bm{p} \parallel_{2}^{2}/2 + V(\bm{q})$, one can verify that the stationary PDF for (\ref{SDEsuccint}) is $\rho_{\infty} \propto \exp(-\beta H(\bm{q},\bm{p}))$, and that the ``total free energy" $F(\rho) := \mathbb{E}_{\rho}\left[H + \beta^{-1}\log\rho\right]$ serves as Lyapunov functional for the associated FPK PDE, i.e., $\frac{\differential F}{\differential t} < 0$. However, the proximal recursion (\ref{JKOscheme:b}) does not apply as is, instead needs to be modified to account the joint conservative-dissipative effect as
\begin{eqnarray}
\varrho_{k} = \underset{\varrho \in \mathscr{D}_2 }\arginf \ \frac{1}{2} \widehat{W}_{h}^2(\varrho_{k-1},\varrho) + h\gamma\: \widehat{F}(\varrho), \quad k\in\mathbb{N}.
\label{JKOschemeKramers}	
\end{eqnarray}
We implement a recursion from \cite[Scheme 2b]{duong2014conservative} where $\widehat{F}(\varrho) := \mathbb{E}_{\rho}\left[\frac{1}{2}\parallel \bm{p}\parallel_{2}^{2} \: + \: \beta^{-1}\log\rho\right]$, and $\widehat{W}_{h}^2(\varrho_{k-1},\varrho)$ is the optimal mass transport cost (as in squared 2-Wasserstein metric (\ref{Wdefn})) with modified cost function (modified integrand in (\ref{Wdefn})), i.e.,
\begin{equation}
	\widehat{W}_{h}^{2}(\rho_{1},\rho_{2}):= \underset{\differential\pi\in\Pi\left(\pi_{1},\pi_{2}\right)}{\inf}\displaystyle\int_{\mathcal{X}\times\mathcal{Y}}\!\!\widehat{s}_{h}\left(\bm{q},\bm{p};\widetilde{\bm{q}},\widetilde{\bm{p}}\right)\:\differential\pi\left(\bm{q},\bm{p},\widetilde{\bm{q}},\widetilde{\bm{p}}\right),\\
\label{ModifiedWass}
\end{equation}
where $(\bm{q},\bm{p})^{\top}$ and $(\widetilde{\bm{q}},\widetilde{\bm{p}})^{\top}$ are two realizations of the state vector governed by (\ref{SDEsuccint})-(\ref{Vdefn}); the integrand in (\ref{ModifiedWass}) is
\begin{align}
\widehat{s}_{h}\left(\bm{q},\bm{p};\widetilde{\bm{q}},\widetilde{\bm{p}}\right) \::=\: &\parallel \widetilde{\bm{p}} - \bm{p} + h\nabla V(\bm{q}) \parallel_{2}^{2} \nonumber\\ 
&+ 12\left\lVert \frac{\widetilde{\bm{q}} - \bm{q}}{h} - \frac{\widetilde{\bm{p}} + \bm{p}}{2} \right\rVert_{2}^{2}.
\label{ModifiedIntegrand}	
\end{align}
That the proximal recursion (\ref{JKOschemeKramers}) with the above choices of functionals $\widehat{F}$ and $\widehat{W}_{h}$ guarantees $\varrho_{k}(\bm{q},\bm{p},h) \rightarrow \rho(\bm{q},\bm{p},t=kh)$ for $k\in\mathbb{N}$ as $h\downarrow 0$, where $\rho(\bm{q},\bm{p},t)$ is the joint PDF generated by the FPK flow for (\ref{SDEsuccint}) at time $t$, was proved in \cite{duong2014conservative}. Our proximal algorithm in Section \ref{SubsecAlgorithm} applies by simply modifying (\ref{Discretepsidef})-(\ref{EDMdef}) as
\begin{eqnarray}
\bm{\psi}_{k-1}(i) &:=& \frac{1}{2}(\bm{p}_{k-1}^{i})^{\top}\bm{p}_{k-1}^{i}, \quad i=1,\hdots,N, \label{modifiedpsidiscrete}\\
\bm{C}_{k}(i,j) &:=& \widehat{s}_{h}(\bm{q}_{k}^{i},\bm{p}_{k}^{i};\bm{q}_{k-1}^{j},\bm{p}_{k-1}^{j}), \quad i,j=1,\hdots,N, \label{modifiedEDMdiscrete}
\label{modifiedpsiEDMdiscrete}	
\end{eqnarray}
respectively. Notice that $\widehat{s}_{h}$ in (\ref{ModifiedIntegrand}) (and consequently, $\widehat{W}_{h}$ in (\ref{ModifiedWass})) is not a metric (in particular, non-symmetric). Thus, the matrices $\bm{C}_{k}$ in (\ref{modifiedEDMdiscrete}) for $k\in\mathbb{N}$ are not symmetric.

To apply the proposed framework, we first non-dimensionalize the variables $\bm{q}$ (m), $\bm{p}$ (m/s), $t$ (s), $\bm{w}$ ($\sqrt{\text{s}}$) in (\ref{SDEsuccint}) as
\begin{eqnarray}
\bm{q}^{\prime} = \bm{q}/R, \quad \bm{p}^{\prime} = \bm{p}/(R/T),\quad t^{\prime} = t/T, \quad \bm{w}^{\prime} = \bm{w}/\sqrt{T},  
\label{NonDimDefn}	
\end{eqnarray}
where $R:=4.2164\times 10^{7}$ m is the radius of the nominal geostationary orbit, and $T=86164$ s is its period. In (\ref{NonDimDefn}), the primed variables are non-dimensionalized. Using (\ref{NonDimDefn}) and It\^{o}'s lemma on (\ref{SDEsuccint}), the non-dimensional SDE in $(\bm{q}^{\prime},\bm{p}^{\prime})^{\top}\in\mathbb{R}^{6}$ becomes
\begin{subequations}
\begin{align}
\differential{\bm{q}^{\prime}} &= \bm{p}^{\prime}\:\differential t^{\prime}, \label{nondim1}\\
\differential{\bm{p}^{\prime}} &= \bigg\{ -\frac{T\mu}{R^{3}}\frac{\bm{q}^{\prime}}{\parallel \bm{q}^{\prime} \parallel_{2}^{3}} + \frac{T^{2}}{R}\bm{f}_{\text{pert}}(R\bm{q}^{\prime}) - \gamma T\bm{p}^{\prime} \bigg\}\differential t^{\prime} \nonumber\\
&\quad + \:\frac{T^{3/2}}{R}\sqrt{2\beta^{-1}\gamma}\:\differential\bm{w}^{\prime}.\label{nondim2}
\end{align} 
\label{SDEnondim}	
\end{subequations}
To avoid numerical conditioning issues, we will apply our algorithm for recursion (\ref{JKOschemeKramers}) associated with the non-dimensional SDE (\ref{SDEnondim}), and then transform the PDF in $(\bm{q}^{\prime},\bm{p}^{\prime})^{\top}$ to the same in original variables $(\bm{q},\bm{p})^{\top}$ via change-of-measure formula. Such a computational pipeline, i.e., density propagation in non-dimensional SDE and then transforming the density back in dimensional variables, is standard in celestial mechanics due to different orders of magnitude in different state variables (see e.g., \cite[Sec. III.B]{sun2016uncertainty}). 

\begin{figure}[hpbt]
\centering
\includegraphics[width=.98\linewidth]{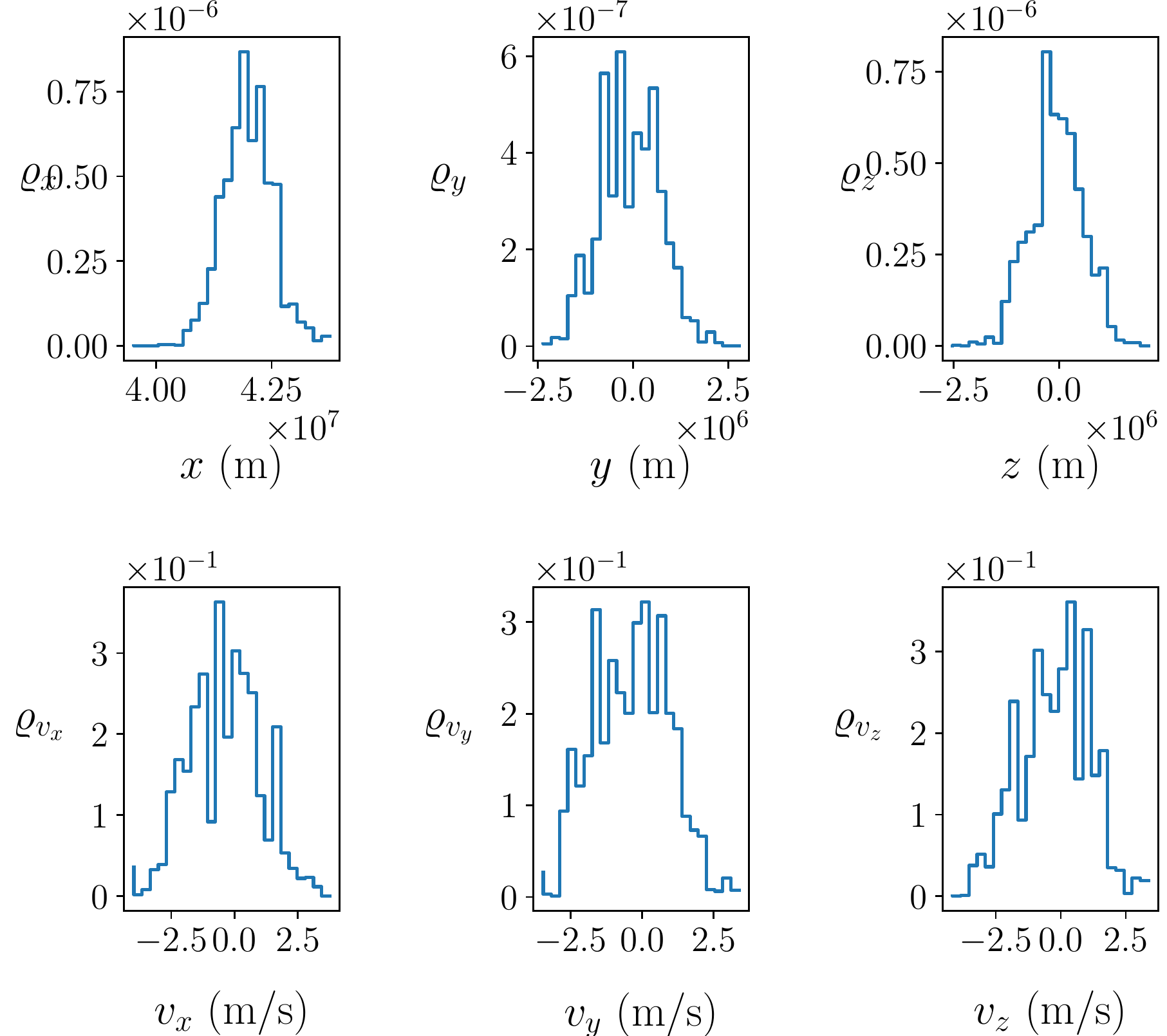}
\caption{\small{Univariate marginal PDFs at $t=0.005$ s for (\ref{SDEsuccint})-(\ref{Vdefn}) computed from the joint PDF at that time obtained via the proposed proximal algorithm for recursion (\ref{JKOschemeKramers}) with time step $h=10^{-5}$, and with parameters $\beta=1$ m$^{2}$/s$^{2}$, $\gamma=1$ s$^{-1}$, $\epsilon = 5\times 10^{-2}$, $\delta=10^{-3}$, $L=100$, and $N=400$.}}
\vspace*{-0.05in}
\label{unimargt1}
\end{figure}

\begin{figure}[hpbt]
\centering
\includegraphics[width=.98\linewidth]{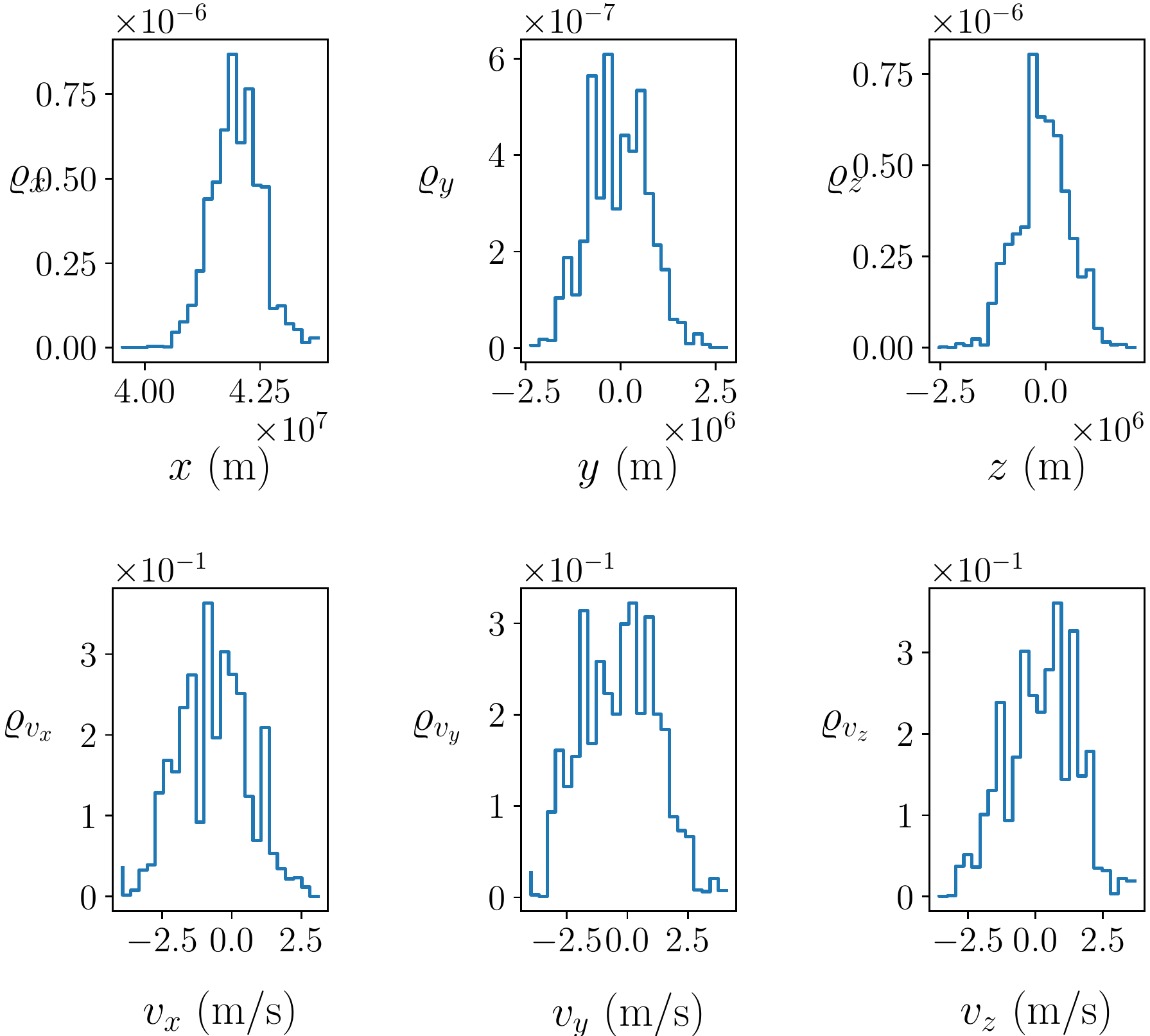}
\caption{\small{Univariate marginal PDFs at $t=0.01$ s for (\ref{SDEsuccint})-(\ref{Vdefn}) computed from the joint PDF at that time obtained via the proposed proximal algorithm for recursion (\ref{JKOschemeKramers}) with time step $h=10^{-5}$, and with parameters $\beta=1$ m$^{2}$/s$^{2}$, $\gamma=1$ s$^{-1}$, $\epsilon = 5\times 10^{-2}$, $\delta=10^{-3}$, $L=100$, and $N=400$.}}
\vspace*{-0.1in}
\label{unimargt4}
\end{figure}

\begin{figure}[tph]
\centering
\includegraphics[width=0.9\linewidth]{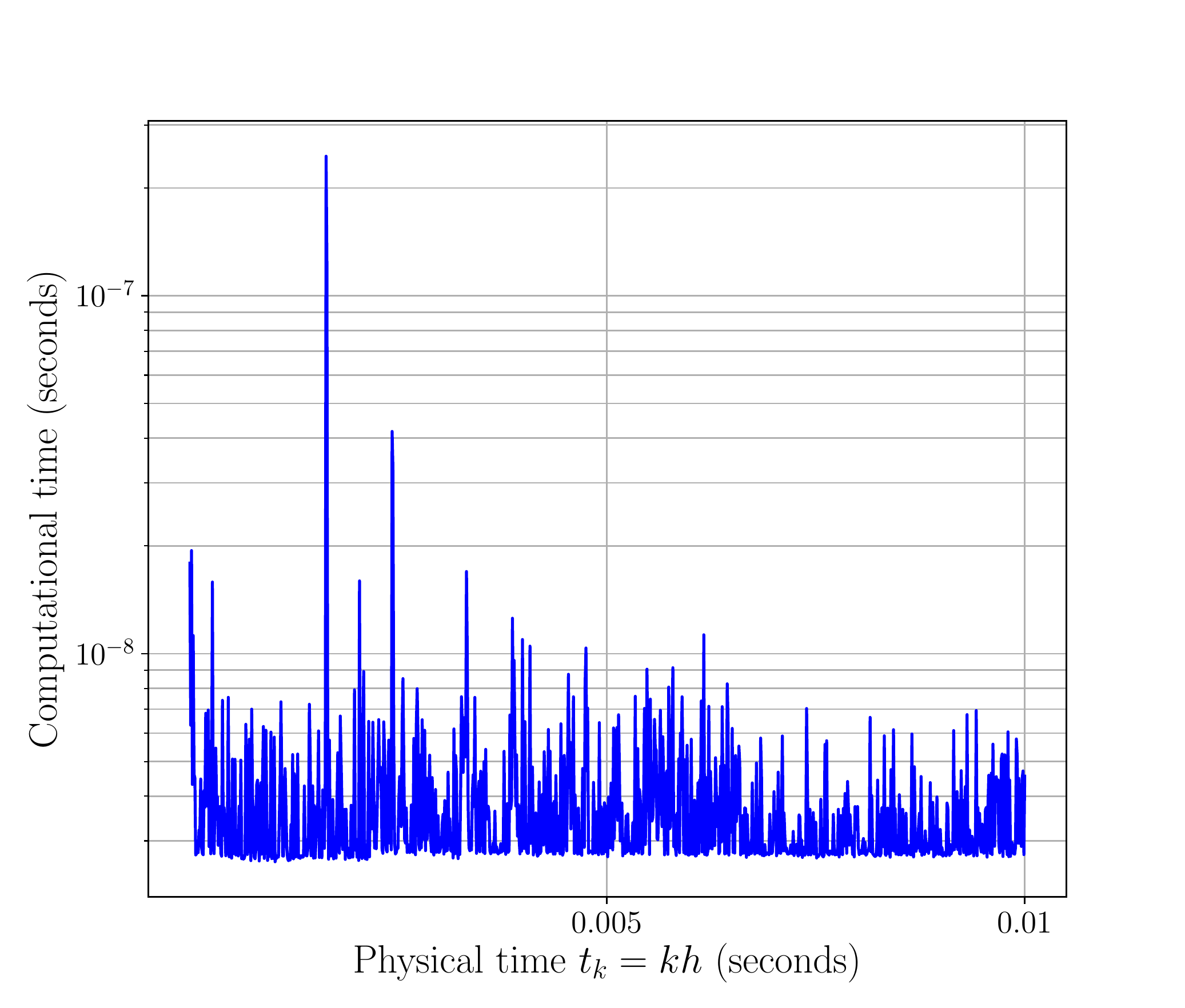}
\caption{\small{The computational times needed for proximal updates in the 6-state numerical example reported in Section \ref{subsecmixed}. Here, the physical time-step $h= 10^{-5}$ s, and $k\in\mathbb{N}$.}}
\vspace*{-0.05in}
\label{CompTimeMixed}
\end{figure}

Figs. \ref{unimargt1} and \ref{unimargt4} show the univariate marginal PDFs at $t=0.005$s and $t=0.010$s respectively, associated with the joint PDFs supported on $\mathbb{R}^{6}$ at those times, computed through the proposed proximal algorithm. For this simulation, the initial joint PDF $\rho_{0}(\bm{q}^{\prime},\bm{p}^{\prime})=\mathcal{N}\left(\bm{\mu}_{0},\bm{\Sigma}_{0}\right)$, where $\bm{\mu}_{0} = \left(1,0,0,0,0,0\right)^{\top}$, and $\bm{\Sigma}_{0}=10^{-4}\times\text{diag}\left(3.335, 6.133, 3.933, 6.562, 9.246, 5.761\right)$. The parameter values used in the simulation are: $h=10^{-5}$, $\beta=1$ m$^{2}$/s$^{2}$, $\gamma=1$ s$^{-1}$, $\epsilon = 5\times 10^{-2}$, $\delta=10^{-3}$, $L=100$, and $N=400$. The computational times for this 6-state example reported in Fig. \ref{CompTimeMixed} reveal that the proximal recursions run remarkably fast (indeed, faster than the runtime for the 2-state example in Section \ref{SubsecNonlinNongauss}, cf. Fig. \ref{RateOfConvProx}).

\section{Conclusions}\label{ConclusionsSectionLabel}
In this paper, novel uncertainty propagation algorithms are presented for computing the flow of the joint PDFs associated with continuous-time stochastic nonlinear systems. By interpreting the PDF flow as gradient descent on the manifold of joint PDFs w.r.t. a suitable metric, the proposed computational framework implements proximal algorithms which are proved to be convergent due to certain contraction properties established herein. Numerical examples are provided to demonstrate the practical use of the proposed algorithm and its efficiency in terms of computational runtime. In contrast to the conventional function approximation algorithms for this problem, the proposed non-parametric framework does not make any spatial discretization, instead performs finite sample probability-weighted scattered data computation in the form of temporal recursion. The location of the atomic measures is delegated to a Euler-Maruyama scheme, thus avoiding spatial discretization. The results of this paper provide computational teeth to the emerging systems-theoretic viewpoint \cite{halder2017gradient,halder2018gradient} that the PDF flows in uncertainty propagation can be seen as gradient flow.

\appendix

\subsection{Derivation of (\ref{SinkhornFormInnerArgmin}):}\label{AppendixDerivation}
\noindent Noting that scaling and translation by constants do not alter the outer argmin in (\ref{EntropyRegJKO}), we rewrite the same as
\begin{align}
\brh_k = h\:\underset{\brh}\argmin \bigg\{  \underset{\bm{M} \in \Pi(\brh_{k-1},\brh)} \min \frac{1}{2h}\langle \bm{C}_{k},\bm{M}\rangle + \frac{\epsilon}{h} H(\bm{M}) \nonumber\\
- \frac{\epsilon}{h}\bm{1}^{\top}\bm{M}\bm{1} + F(\brh)   \bigg\},
\label{EntropyRegJKOscaledANDtranslated}	
\end{align}
since $\bm{1}^{\top}\bm{M}\bm{1}=1$. The Lagrangian $\mathcal{L}$ associated with the inner minimization in (\ref{EntropyRegJKOscaledANDtranslated}) is given by
\begin{align}
\mathcal{L} = \frac{1}{2h}\langle \bm{C}_{k},\bm{M}\rangle + \frac{\epsilon}{h} H(\bm{M}) - \frac{\epsilon}{h}\bm{1}^{\top}\bm{M}\bm{1} + F(\brh) \nonumber\\
+ \langle \bm{\lambda}_{0}, \bm{M}\bm{1} - \brh_{k-1}\rangle + \langle \bm{\lambda}_{1}, \bm{M}^{\top}\bm{1} - \brh\rangle.
\label{InnerLagrangian}	
\end{align}
Setting the derivative of (\ref{InnerLagrangian}) w.r.t. the $(i,j)$-th element of $\bm{M}$ equal to zero, followed by algebraic simplification yields (\ref{SinkhornFormInnerArgmin}).


\balance

\bibliographystyle{IEEEtran}
\bibliography{refs}

\begin{thebibliography}{10}
\providecommand{\url}[1]{#1}
\csname url@samestyle\endcsname
\providecommand{\newblock}{\relax}
\providecommand{\bibinfo}[2]{#2}
\providecommand{\BIBentrySTDinterwordspacing}{\spaceskip=0pt\relax}
\providecommand{\BIBentryALTinterwordstretchfactor}{4}
\providecommand{\BIBentryALTinterwordspacing}{\spaceskip=\fontdimen2\font plus
\BIBentryALTinterwordstretchfactor\fontdimen3\font minus
  \fontdimen4\font\relax}
\providecommand{\BIBforeignlanguage}[2]{{%
\expandafter\ifx\csname l@#1\endcsname\relax
\typeout{** WARNING: IEEEtran.bst: No hyphenation pattern has been}%
\typeout{** loaded for the language `#1'. Using the pattern for}%
\typeout{** the default language instead.}%
\else
\language=\csname l@#1\endcsname
\fi
#2}}
\providecommand{\BIBdecl}{\relax}
\BIBdecl

\bibitem{risken1996fokker}
H.~Risken, \emph{Fokker-Planck equation: Methods of solution and
  applications}.\hskip 1em plus 0.5em minus 0.4em\relax Springer, 1996.

\bibitem{ehrendorfer1994liouville}
M.~Ehrendorfer, ``The {L}iouville equation and its potential usefulness for the
  prediction of forecast skill. part {I}: {T}heory,'' \emph{Monthly Weather
  Review}, vol. 122, no.~4, pp. 703--713, 1994.

\bibitem{halder2010beyond}
A.~Halder and R.~Bhattacharya, ``Beyond {M}onte {C}arlo: A computational
  framework for uncertainty propagation in planetary entry, descent and
  landing,'' in \emph{AIAA Guidance, Navigation, and Control Conference}, 2010,
  p. 8029.

\bibitem{halder2011dispersion}
------, ``Dispersion analysis in hypersonic flight during planetary entry using
  stochastic {L}iouville equation,'' \emph{Journal of Guidance, Control, and
  Dynamics}, vol.~34, no.~2, pp. 459--474, 2011.

\bibitem{hess1976fokker}
S.~Hess, ``Fokker-{P}lanck-equation approach to flow alignment in liquid
  crystals,'' \emph{Zeitschrift f{\"u}r Naturforschung A}, vol.~31, no.~9, pp.
  1034--1037, 1976.

\bibitem{muschik1997mesoscopic}
W.~Muschik and B.~Su, ``Mesoscopic interpretation of {F}okker-{P}lanck equation
  describing time behavior of liquid crystal orientation,'' \emph{The Journal
  of Chemical Physics}, vol. 107, no.~2, pp. 580--584, 1997.

\bibitem{kalmykov1998analytical}
Y.~P. Kalmykov and W.~T. Coffey, ``Analytical solutions for rotational
  diffusion in the mean field potential: application to the theory of
  dielectric relaxation in nematic liquid crystals,'' \emph{Liquid crystals},
  vol.~25, no.~3, pp. 329--339, 1998.

\bibitem{park2005diffusion}
W.~Park, J.~S. Kim, Y.~Zhou, N.~J. Cowan, A.~M. Okamura, and G.~S. Chirikjian,
  ``Diffusion-based motion planning for a nonholonomic flexible needle model,''
  in \emph{Robotics and Automation, 2005. ICRA 2005. Proceedings of the 2005
  IEEE International Conference on}.\hskip 1em plus 0.5em minus 0.4em\relax
  IEEE, 2005, pp. 4600--4605.

\bibitem{park2008kinematic}
W.~Park, Y.~Liu, Y.~Zhou, M.~Moses, and G.~S. Chirikjian, ``Kinematic state
  estimation and motion planning for stochastic nonholonomic systems using the
  exponential map,'' \emph{Robotica}, vol.~26, no.~4, pp. 419--434, 2008.

\bibitem{hamann2008framework}
H.~Hamann and H.~W{\"o}rn, ``A framework of space--time continuous models for
  algorithm design in swarm robotics,'' \emph{Swarm Intelligence}, vol.~2, no.
  2-4, pp. 209--239, 2008.

\bibitem{challa2000nonlinear}
S.~Challa and Y.~Bar-Shalom, ``Nonlinear filter design using
  {F}okker-{P}lanck-{K}olmogorov probability density evolutions,'' \emph{IEEE
  Transactions on Aerospace and Electronic Systems}, vol.~36, no.~1, pp.
  309--315, 2000.

\bibitem{daum2005nonlinear}
F.~Daum, ``Nonlinear filters: beyond the {K}alman filter,'' \emph{IEEE
  Aerospace and Electronic Systems Magazine}, vol.~20, no.~8, pp. 57--69, 2005.

\bibitem{halder2011model}
A.~Halder and R.~Bhattacharya, ``Model validation: A probabilistic
  formulation,'' in \emph{Decision and Control and European Control Conference
  (CDC-ECC), 2011 50th IEEE Conference on}.\hskip 1em plus 0.5em minus
  0.4em\relax IEEE, 2011, pp. 1692--1697.

\bibitem{halder2012further}
------, ``Further results on probabilistic model validation in {W}asserstein
  metric,'' in \emph{Decision and Control (CDC), 2012 IEEE 51st Annual
  Conference on}.\hskip 1em plus 0.5em minus 0.4em\relax IEEE, 2012, pp.
  5542--5547.

\bibitem{halder2014probabilistic}
------, ``Probabilistic model validation for uncertain nonlinear systems,''
  \emph{Automatica}, vol.~50, no.~8, pp. 2038--2050, 2014.

\bibitem{halder2015optimal}
A.~Halder, K.~Lee, and R.~Bhattacharya, ``Optimal transport approach for
  probabilistic robustness analysis of {F}-16 controllers,'' \emph{Journal of
  Guidance, Control, and Dynamics}, vol.~38, no.~10, pp. 1935--1946, 2015.

\bibitem{bellman1957dynamic}
R.~E. Bellman, \emph{Dynamic Programming}.\hskip 1em plus 0.5em minus
  0.4em\relax Courier Dover Publications, 1957.

\bibitem{halder2017gradient}
A.~Halder and T.~T. Georgiou, ``Gradient flows in uncertainty propagation and
  filtering of linear {G}aussian systems,'' \emph{2017 IEEE Conference on
  Decision and Control, arXiv preprint arXiv:1704.00102}, 2017.

\bibitem{halder2018gradient}
------, ``Gradient flows in filtering and {F}isher-{R}ao geometry,'' in
  \emph{2018 Annual American Control Conference (ACC)}.\hskip 1em plus 0.5em
  minus 0.4em\relax IEEE, 2018, pp. 4281--4286.

\bibitem{parikh2014proximal}
N.~Parikh, S.~Boyd \emph{et~al.}, ``Proximal algorithms,'' \emph{Foundations
  and Trends{\textregistered} in Optimization}, vol.~1, no.~3, pp. 127--239,
  2014.

\bibitem{jordan1998variational}
R.~Jordan, D.~Kinderlehrer, and F.~Otto, ``The variational formulation of the
  {F}okker--{P}lanck equation,'' \emph{SIAM Journal on Mathematical Analysis},
  vol.~29, no.~1, pp. 1--17, 1998.

\bibitem{villani2003topics}
C.~Villani, \emph{Topics in optimal transportation}.\hskip 1em plus 0.5em minus
  0.4em\relax American Mathematical Soc., 2003, no.~58.

\bibitem{ambrosio2008gradient}
L.~Ambrosio, N.~Gigli, and G.~Savar{\'e}, \emph{Gradient flows: in metric
  spaces and in the space of probability measures}.\hskip 1em plus 0.5em minus
  0.4em\relax Springer Science \& Business Media, 2008.

\bibitem{santambrogio2017euclidean}
F.~Santambrogio, ``$\{$Euclidean, metric, and Wasserstein$\}$ gradient flows:
  an overview,'' \emph{Bulletin of Mathematical Sciences}, vol.~7, no.~1, pp.
  87--154, 2017.

\bibitem{caluya2018proximal}
K.~F. Caluya and A.~Halder, ``Proximal recursion for solving the
  {F}okker-{P}lanck equation,'' in \emph{2019 Annual American Control
  Conference (ACC)}, 2019.

\bibitem{benamou2000computational}
J.-D. Benamou and Y.~Brenier, ``A computational fluid mechanics solution to the
  monge-kantorovich mass transfer problem,'' \emph{Numerische Mathematik},
  vol.~84, no.~3, pp. 375--393, 2000.

\bibitem{colombo2012nonlocal}
R.~M. Colombo and M.~L{\'e}cureux-Mercier, ``Nonlocal crowd dynamics models for
  several populations,'' \emph{Acta Mathematica Scientia}, vol.~32, no.~1, pp.
  177--196, 2012.

\bibitem{gomez2012bounded}
J.~Gomez-Serrano, C.~Graham, and J.-Y. Le~Boudec, ``The bounded confidence
  model of opinion dynamics,'' \emph{Mathematical Models and Methods in Applied
  Sciences}, vol.~22, no.~02, p. 1150007, 2012.

\bibitem{askarzadeh2019}
Z.~Askarzadeh, R.~Fu, A.~Halder, Y.~Chen, and T.~T. Georgiou, ``Stability
  theory of stochastic models in opinion dynamics,'' \emph{IEEE Transactions on
  Automatic Control, doi: https://doi.org/10.1109/TAC.2019.2912490}, 2019.

\bibitem{villani2004trend}
C.~Villani, ``Trend to equilibrium for dissipative equations, functional
  inequalities and mass transportation,'' \emph{Contemporary Mathematics}, vol.
  353, p.~95, 2004.

\bibitem{carrillo2003kinetic}
J.~A. Carrillo, R.~J. McCann, and C.~Villani, ``Kinetic equilibration rates for
  granular media and related equations: entropy dissipation and mass
  transportation estimates,'' \emph{Revista Matem{\'a}tica Iberoamericana},
  vol.~19, no.~3, pp. 971--1018, 2003.

\bibitem{karlsson2017generalized}
J.~Karlsson and A.~Ringh, ``Generalized {S}inkhorn iterations for regularizing
  inverse problems using optimal mass transport,'' \emph{SIAM Journal on
  Imaging Sciences}, vol.~10, no.~4, pp. 1935--1962, 2017.

\bibitem{cuturi2013sinkhorn}
M.~Cuturi, ``Sinkhorn distances: Lightspeed computation of optimal transport,''
  in \emph{Advances in neural information processing systems}, 2013, pp.
  2292--2300.

\bibitem{benamou2015iterative}
J.-D. Benamou, G.~Carlier, M.~Cuturi, L.~Nenna, and G.~Peyr{\'e}, ``Iterative
  {B}regman projections for regularized transportation problems,'' \emph{SIAM
  Journal on Scientific Computing}, vol.~37, no.~2, pp. A1111--A1138, 2015.

\bibitem{chen2016entropic}
Y.~Chen, T.~Georgiou, and M.~Pavon, ``Entropic and displacement interpolation:
  a computational approach using the {H}ilbert metric,'' \emph{SIAM Journal on
  Applied Mathematics}, vol.~76, no.~6, pp. 2375--2396, 2016.

\bibitem{chen2016relation}
Y.~Chen, T.~T. Georgiou, and M.~Pavon, ``On the relation between optimal
  transport and {Schr{\"o}dinger} bridges: A stochastic control viewpoint,''
  \emph{Journal of Optimization Theory and Applications}, vol. 169, no.~2, pp.
  671--691, 2016.

\bibitem{kloeden2013numerical}
P.~E. Kloeden and E.~Platen, \emph{Numerical solution of stochastic
  differential equations}.\hskip 1em plus 0.5em minus 0.4em\relax Springer
  Science \& Business Media, 2013, vol.~23.

\bibitem{hutzenthaler2010strong}
M.~Hutzenthaler, A.~Jentzen, and P.~E. Kloeden, ``Strong and weak divergence in
  finite time of {Euler's} method for stochastic differential equations with
  non-globally {L}ipschitz continuous coefficients,'' \emph{Proceedings of the
  Royal Society A: Mathematical, Physical and Engineering Sciences}, vol. 467,
  no. 2130, pp. 1563--1576, 2010.

\bibitem{hutzenthaler2012strong}
M.~Hutzenthaler, A.~Jentzen, P.~E. Kloeden \emph{et~al.}, ``Strong convergence
  of an explicit numerical method for {SDEs} with nonglobally {L}ipschitz
  continuous coefficients,'' \emph{The Annals of Applied Probability}, vol.~22,
  no.~4, pp. 1611--1641, 2012.

\bibitem{hutzenthaler2015numerical}
M.~Hutzenthaler and A.~Jentzen, \emph{Numerical approximations of stochastic
  differential equations with non-globally {L}ipschitz continuous
  coefficients}.\hskip 1em plus 0.5em minus 0.4em\relax American Mathematical
  Society, 2015, vol. 236, no. 1112.

\bibitem{higham2002strong}
D.~J. Higham, X.~Mao, and A.~M. Stuart, ``Strong convergence of {E}uler-type
  methods for nonlinear stochastic differential equations,'' \emph{SIAM Journal
  on Numerical Analysis}, vol.~40, no.~3, pp. 1041--1063, 2002.

\bibitem{thompson1963certain}
A.~C. Thompson, ``On certain contraction mappings in a partially ordered vector
  space,'' \emph{Proceedings of the American Mathematical Society}, vol.~14,
  no.~3, pp. 438--443, 1963.

\bibitem{lim2009nonlinear}
Y.~Lim, ``Nonlinear equations based on jointly homogeneous mappings,''
  \emph{Linear Algebra and Its Applications}, vol. 430, no.~1, pp. 279--285,
  2009.

\bibitem{nussbaum1988hilbert}
R.~D. Nussbaum and D.~Hilbert, \emph{Hilbert's projective metric and iterated
  nonlinear maps}.\hskip 1em plus 0.5em minus 0.4em\relax American Mathematical
  Soc., 1988, vol. 391.

\bibitem{malrieu2011uniform}
F.~Malrieu, ``{Uniform propagation of chaos for McKean-Vlasov equations},''
  \url{https://tinyurl.com/ycxk5g9t}, 2011, [Online; accessed
  18-November-2018].

\bibitem{benamou2016augmented}
J.-D. Benamou, G.~Carlier, and M.~Laborde, ``An augmented {L}agrangian approach
  to {W}asserstein gradient flows and applications,'' \emph{ESAIM: Proceedings
  and Surveys}, vol.~54, pp. 1--17, 2016.

\bibitem{laborde201712}
M.~Laborde, ``On some nonlinear evolution systems which are perturbations of
  {W}asserstein gradient flows,'' \emph{Topological Optimization and Optimal
  Transport: In the Applied Sciences}, vol.~17, p. 304, 2017.

\bibitem{cox1985theory}
J.~C. Cox, J.~E. Ingersoll~Jr, and S.~A. Ross, ``A theory of the term structure
  of interest rates,'' \emph{Econometrica}, vol.~53, no.~2, pp. 385--408, 1985.

\bibitem{moller2010state}
J.~K. M{\o}ller and H.~Madsen, \emph{From state dependent diffusion to constant
  diffusion in stochastic differential equations by the {L}amperti
  transform}.\hskip 1em plus 0.5em minus 0.4em\relax DTU Informatics, 2010.

\bibitem{luschgy2006functional}
H.~Luschgy and G.~Pag{\`e}s, ``Functional quantization of a class of {B}rownian
  diffusions: a constructive approach,'' \emph{Stochastic Processes and their
  Applications}, vol. 116, no.~2, pp. 310--336, 2006.

\bibitem{sun2016uncertainty}
Y.~Sun and M.~Kumar, ``Uncertainty forecasting in the perturbed two-body
  problem via tensor decomposition,'' in \emph{American Control Conference
  (ACC), 2016}.\hskip 1em plus 0.5em minus 0.4em\relax IEEE, 2016, pp.
  5431--5436.

\bibitem{duong2014conservative}
M.~H. Duong, M.~A. Peletier, and J.~Zimmer, ``Conservative-dissipative
  approximation schemes for a generalized {K}ramers equation,''
  \emph{Mathematical Methods in the Applied Sciences}, vol.~37, no.~16, pp.
  2517--2540, 2014.

\end{thebibliography}

%
%

\end{document}